\documentclass[masc,preprint]{informs3}

\usepackage[boxruled,linesnumbered]{algorithm2e}
\usepackage[caption=false]{subfig}

\newtheorem{remark}{Remark}
\newtheorem{definition}{Definition}

\newtheorem{example}{Example}

\newtheorem{proposition}{Proposition}

\newtheorem{corol}{Corollary}

\newcounter{Propcount}
\usepackage[sort&compress,longnamesfirst]{natbib}
\bibpunct[, ]{(}{)}{,}{a}{}{,}%
\def\bibfont{\small}%

\usepackage{bbm}

\usepackage{pgfplots}
\pgfplotsset{compat=1.18}

\renewcommand{\theARTICLETOP}{} 

\makeatletter
\def\setoddRH{\hbox to \textwidth{\hfill\fs.10.10.\thepage}}
\def\setevenRH{\hbox to \textwidth{\fs.10.10.\thepage\hfill}}
\makeatother

\begin{document}

\RUNAUTHOR{Boomsma, Devine and Anjos}
\RUNTITLE{Unit commitment constrained Nash equilibrium in power markets}
\TITLE{Unit commitment constrained Nash equilibrium in power markets}

\ARTICLEAUTHORS{%
\AUTHOR{Trine Krogh Boomsma$^*$}
\AFF{Department of Mathematical Sciences, University of Copenhagen, 2100 K\o benhavn \O, Denmark}
\AFF{$^*$ Corresponding author, \EMAIL{trine@math.ku.dk}}
\AUTHOR{Mel T. Devine}
\AFF{University College Dublin, Energy Institute, College of Business, Dublin 4}
\AUTHOR{Miguel F. Anjos}
\AFF{University of Edinburgh, School of Mathematics, James Clerk Maxwell Building, Peter Guthrie Tait Road, Edinburgh, EH9 3FD}
} 

\ABSTRACT{Equilibrium modeling for power markets usually assumes convexity of each players' optimization problem. Although the importance of accounting for fixed generation costs and start-up costs, minimum generation levels and/or minimum up-time and down-time restrictions in production scheduling is widely acknowledged, such modeling does not allow for discrete decisions. This paper considers unit commitment constrained Nash equilibria. First, we derive novel optimality conditions tailored for the mixed-integer convex programming problem of joint unit commitment and economic dispatch of a self-scheduling profit-maximizing producer. Next, we use these to formulate a Nash equilibrium as a mixed-integer complementarity problem, which facilitates the development of a procedure to obtain multiple equilibria. The approach can be adapted to both perfectly and imperfectly competitive market settings. While an equilibrium may not always exist, we give sufficient conditions under which a Cournot-Nash equilibrium can be obtained by mixed-integer convex programming under the standard assumption of an affine inverse demand curve and convex generating costs, and use this to establish existence. A case study demonstrates the feasibility of using optimisation to obtain a Cournot-Nash equilibrium for unit commitment constrained market clearing. Our results confirm that ignoring unit commitment creates significant welfare losses, although these vary substantially across equilibria.}

\KEYWORDS{Nash equilibrium; power markets; unit commitment}

\maketitle

\section{Introduction}

Since the liberalisation in the 1990s, modern electricity markets have been organised as auctions into which producers submit profit-maximising bids and generators are dispatched by a market operator. Analysing market dynamics and supporting market-clearing decisions therefore calls for a game-theoretical approach. Equilibrium modeling of competitive markets usually assumes convexity of each players’ optimisation problem. However, as already noted by \cite{Scarf1994}, most commodity markets in today's advanced economies involve considerable non-convexities in the form of discrete decisions. For power markets in particular, the discreteness of decisions is due to fixed operating costs and start-up costs, minimum generation levels and/or minimum up-time and down-time restrictions in generator scheduling. The importance of so-called unit commitment decisions is widely acknowledged in system-wide cost minimisation or social welfare maximisation. Start-up costs in particular have been shown to have a critical impact on the optimal dispatch, especially for power systems with high shares of variable renewable production (\cite{Aonghus2013}). Also, econometric analysis of historical bids has documented substantial start-up costs and argued that accounting for them would better explain observed bidding strategies and production patterns (\cite{Reguant2014}). Reflecting this, most electricity markets in the US allow for explicitly including fixed operating costs and start-up costs of the generators in the bids, whereas EU markets generally require producers to internalize these costs (Herrero (2020)). Naturally, non-convexity attracts increasing attention in modeling competitive electricity markets.  

In this paper, we model a power market in which producers have both continuous and discrete operating decisions as a discretely-constrained Nash equilibrium. Accordingly, we consider a number of self-scheduling producers, each operating one or more generating units, for which commitment involves binary decisions whereas dispatch decisions are continuous. We refer to our model as a unit commitment constrained Nash equilibrium. On the supply side, our formulation allows for discontinuous costs and non-connected operating regions of individual producers, whereas we represent the demand side by an aggregate inverse demand curve. Part of our analysis applies to both perfectly and imperfectly competitive markets, although our main focus is obtaining Cournot-Nash equilibria. 

Our contributions are the following. First, we derive tailored optimality conditions for the mixed-integer convex programming formulation of joint unit commitment and economic dispatch for a self-scheduling profit-maximising producer, by leveraging the binary nature of unit commitment. The optimality conditions include an exponential number of inequalities and complementarity constraints, but we demonstrate how this number can be significantly reduced in specific cases. For the profit-maximisation problem, we also derive valid inequalities that prove valuable for the subsequent equilibrium analysis. Second, we employ the optimality conditions to characterise a unit commitment constrained Nash equilibrium by an equation system and show how the valid inequalities reduce the total number of constraints required. This characterization of an equilibrium facilitates the development of a novel procedure for identifying multiple distinct equilibria. The approach can be adapted to both perfectly and imperfectly competitive market settings. Third, for the Cournot-Nash framework, we show that an equilibrium can be obtained by solving a single system-wide mixed-integer convex optimisation problem, under standard assumptions of an affine inverse demand curve and convex generating costs. While prior work has shown that an equilibrium may not exist under perfect competition, we use this convex optimisation problem to establish existence under Cournot competition. To the best of our knowledge, we are the first to analyse the unit commitment constrained Cournot-Nash equilibrium. Our theoretical results and notably our mixed-integer convex optimisation formulation offer market operators and policy-makers practical decision support, without the need for specialised algorithms and using standard optimisation software.

We carry out computational experiments using real-world data. Our results compare market equilibria with and without discreteness to demonstrate the importance of accounting for unit commitment decisions. Also, they compare the system-wide optimal equilibrium to alternative equilibria with respect to market prices and volumes. We use a case study of the all-island Irish electricity market, with a dataset including both technical and economic features of 52 generating units spread across 16 producers.

The paper is structured as follows. Section 2 provides an overview of related literature. In Sections 3 and 4, we consider the unit commitment problem of a single producer and the unit commitment constrained equilibrium problem, respectively. Section 5 discusses the numerical experiments and Section 6 concludes the paper.

\section{Related literature}

Unit commitment in power system optimisation has a long history. Originally, models would rely on central decision-making and system-wide social welfare maximisation and did not account for market characteristics such as competition. The liberalisation of electricity markets motivated an equilibrium perspective with representations of each player’s surplus maximisation problem. Convexity assumptions facilitate necessary and sufficient optimality conditions, the collection of which characterises the equilibrium and typically forms a mixed complementarity problem (MCP). Examples of MCPs for modelling electricity markets include \cite{Hobbs2001, Hobbs2007, Gabriel2013, Wogrin2013}, in connection to capacity expansion, \cite{Garcia2010, Ehrenmann2011}, and in a more general energy modeling context, \cite{Bushnell2003, Gabriel2005, Chen2020}. For a comprehensive overview, we refer the reader to \cite{Egging2020}. None of the cited references incorporate discrete decision-making such as unit commitment. 

Given the importance of discreteness, the modeling of electricity markets with non-convexities has attracted considerable attention in the mathematical optimisation literature (\cite{Philpott2006, Bjorndal2008, Liu2013, ONeill2013}). Under convexity and linearity, it is possible to prove existence of market clearing prices. Furthermore, there exists an equivalent system-wide optimisation problem, in which linear equilibrium prices are simply the dual variables of the market clearing constraints. In the presence of unit commitment, it is less clear how to define electricity prices. \cite{Humbs2022} analyse the special case in which the linear relaxation of each market player's optimisation problem solves their mixed-integer linear problem (MILP). By exploiting total unimodularity, they derive necessary and sufficient conditions for the existence of a price and demonstrate for an example with minimum up and down time constraints that produce the convex hull of the MILP. For strong formulations of the unit commitment problem, see also \cite{Ostrowski2012} and \cite{Pan2016} for the deterministic and stochastic problem, respectively. Generally, however, linear prices may not exist, if the linear relaxation of the market clearing problem fails to solve the mixed-integer problem. A common approach to determining prices is to minimize total lost opportunity costs of the market players under the system-wide optimal solution. The minimisation of global opportunity costs is known as convex hull pricing, and is obtained by convexifying all players' optimisation problems, see \cite{Gribik2007}. Minimisation of local opportunity costs can be achieved by solving the system-wide MILP, fixing the unit commitment and reoptimising the linear problem, with the resulting dual prices referred to as IP-prices. \cite{ONeill2005} determine IP-prices by adding equality constraints to the linear problem that force the original integer variables to assume their optimal values and obtain duals to both market clearing conditions and added constraints. Neither convex hull prices or IP-prices, however, are guaranteed to support an equilibrium. To ensure appropriate incentives, it may be necessary to introduce compensation to market players in the form of uplift or make-whole payments, offsetting lost opportunity costs or negative profits. The recent study by \cite{Ahunbay2024} compares such pricing schemes and compensation mechanisms and demonstrates numerically that focusing on one type of opportunity costs can cause substantial increases in other types of opportunity costs. To balance performance, the authors therefore suggest a multi-objective approach. In contrast to obtaining prices through opportunity cost minimisation, we aim at a market clearing price that internalises the costs of unit commitment and respects the incentives of all market players. We do not restrict ourselves to prices being dual variables, but rather define the price as a free variable that allows for balance between demand and supply, consistent with \cite{Gabriel2013, Todd2016, Humbs2022, Huppmann2018, Weinhold2020}.

In addition to defining and determining prices, it remains a challenge to obtain discretely constrained Nash equilibria. For the special case of a unit commitment constrained Nash equilibrium, \cite{Gabriel2013} suggest to relax the integrality of the unit commitment variables, formulate an MCP and reinforce integrality to arrive at a mixed-integer MCP. To avoid feasibility issues, complementarity is relaxed and violations are penalised in an objective function. As the solutions may not constitute a Nash equilibrium, \cite{Weinhold2020} extend the heuristic by an ex post check for incentive compatibility. An alternative approach is proposed by \cite{Devine2025} and involves a Gauss-Seidel type of algorithm, based on iteratively solving each producer’s problem while taking the decisions of other producers as given. Results for the Irish power system shows that neglecting unit commitment exaggerates the impact of price-making behaviour, documenting the need for simultaneously considering discreteness and imperfect competition. While the Gauss-Seidel algorithm can be shown to converge for Cournot competition under some assumptions, convergence of the algorithm is not guaranteed in the presence of unit commitment. Furthermore, the procedure involves repeatedly solving several optimisation problems. In contrast, the present work solves a single optimisation problem and only once to obtain a Cournot-Nash equilibrium.

A few references consider the general class of discretely constrained Nash equilibria. \cite{Todd2016} studies pure integer (unconstrained) Cournot equilibria and establishes existence of an equilibrium via a potential function approach, see also \cite{Monderer1996} for the continuous case. We impose the same assumptions of affine inverse demand and convex quadratic costs, but examine the mixed-integer problem of unit commitment and allow for various operational constraints in the producers' problems. Nevertheless, our approach to proving existence is similar in the sense that we establish an optimisation problem (although constrained), for which an optimal solution is an equilibrium. \cite{Todd2016} computes equilibria by solving continuous equilibrium problems and reinforcing integrality by binary search. For the even more general class of non-convex Nash equilibria, \cite{Fuller2017} and \cite{Harwood2024} suggest to obtain a minimum disequilibrium by optimisation, e.g.\ of total opportunity costs. The former solves the special case of mixed-integer convex programming by relaxation of integrality and complementarity, whereas the latter proposes a lower bounding procedure based on the relaxation of each player's optimality. Both references illustrate their method for a unit commitment constrained perfectly competitive Nash equilibrium, for which an equilibrium may not exist. For matrix games, \cite{Avis2010} and \cite{Audet2006} simply solve an equilibrium for all possible strategies and check for deviation incentives. This approach can be applied to the unit commitment constrained Nash equilibrium, but requires $2^{TI}$ continuous equilibrium problems to be solved, where $T$ is the number of time periods and $I$ is the number of generating units in the market. Our approach is exact like this method, but scales linearly with $I$.

Closest to our work is the study of binary Nash equilibria by \cite{Huppmann2018}. The idea behind our optimality conditions is similar in the sense that we also condition on the values of the binary variables and derive optimality conditions for the resulting continuous optimisation problem. Whereas the authors enforce optimality of the binaries by a reformulation of incentive compatibility using big-M constraints, we directly apply the inequalities defining a Nash equilibrium. As \cite{Huppmann2018}, we avoid the comparison of all binary solutions of all players by adding valid inequalities, but specify tight inequalities for the multidimensional binary problem of each player. Our focus on a tight formulation reduces the search space of equilibria and facilitates efficient computations. The most important difference from our work is their definition of an equilibrium price that balances social welfare and compensation payments to producers. We do not allow for compensation but simply determine a price that facilitates equilibrium (when it exists). Also, their analysis does not account for market power, whereas we cover Cournot-Nash equilibria. In particular, we show that the price that maximises market power adjusted social welfare supports an equilibrium under Cournot competition. 

\section{Optimality conditions for unit commitment and economic dispatch}

Consider the unit commitment (UC) problem of a profit-maximizing power producer. Let $u=(u_1^\top,\dots,u_I^\top)^\top\in\mathbb{R}^{IT}$ and $q=(q_1^\top,\dots,q_T^\top)^\top\in\mathbb{R}^{IT}$ represent binary unit commitment decisions and continuous dispatch decisions, respectively, of the generating units $i=1,\dots,I$ at the discrete time periods $t=1,\dots,T$ (for decisions of unit $i$, we use the notation $u_i=(u_{i1},\dots,u_{iT})^\top\in\mathbb{R}^{T}$, and for decisions at time $t$, we use $u_t=(u_{1t},\dots,u_{It})^\top\in\mathbb{R}^{I}$). The mixed-integer convex programming problem is:
\begin{align}
\max_{q,u} \Big\{&\sum_{t=1}^TP_t(q_{t})-\sum_{i=1}^I\sum_{t=1}^TC_i(q_{it})-\sum_{i=1}^IS_i(u_i) : \nonumber\\
&G_iq_i+H_iu_i\leq h_i, A_iu_i\leq b_i, u_i\in\{0,1\}^T, i=1,\dots,I\Big\},\label{eq1}
\end{align}
where $P_t:\mathbb{R}^{I}\rightarrow\mathbb{R}$ is a concave and continuously differentiable revenue function, $C_i:\mathbb{R}\rightarrow\mathbb{R}$ is a convex and continuously differentiable function describing variable operating costs, and $S_i:\mathbb{R}^{T}\rightarrow\mathbb{R}$ is a continuous function including fixed operating costs. Furthermore, the matrices and vectors $A_i\in\mathbb{R}^{n\times T}, b_i\in\mathbb{R}^{n}, G_i,H_i\in\mathbb{R}^{m\times T}$ and $h_i\in\mathbb{R}^{m}$ define operational constraints for $i=1,\dots,I$. Note that the constraints and costs are separable with respect to the production units, whereas this may not necessarily apply for revenue.

\begin{remark}
For example, revenue is given by a market price times total production $$P_t(q_1,\dots,q_I)=p_t\Big(\sum_{i=1}^Iq_{i}\Big)\sum_{i=1}^Iq_{i},$$ where the price is determined by an inverse affine demand curve $p_t(d_t)=\beta_t-\alpha_t d_t$, with $\beta_t,\alpha_t\geq 0$, and the balance between demand and supply $d_t=\sum_{i=1}^Iq_{i}$ for $t=1,\dots,T$. 

Moreover, variable costs are convex quadratic $$C_i(q)=a_iq+b_iq^2,$$ with $a_i,b_i\geq 0$, and fixed costs include online, start-up and shut-down costs $$S_i(u_1,\dots,u_T)=\sum_{t=1}^T(s_i^{ON}u_t+s_i^{SU}\max\{u_{t}-u_{t-1},0\}+s_i^{SD}\max\{u_{t-1}-u_{t},0\}),$$ with $s_i^{ON}, s_i^{SU},s_i^{SD}\geq 0$, for $i=1,\dots,I$.
\end{remark}

We assume that the problem (\ref{eq1}) is feasible and bounded such that it has an optimal solution. By exploiting the binary nature of unit commitment, the problem is equivalent to
\begin{align}
\max_{u} \Big\{&\max_{q}\Big\{\sum_{t=1}^TP_t(q_{t})-\sum_{i=1}^I\sum_{t=1}^TC_i(q_{it}): G_iq_i\leq h_i-H_iu_i, i=1,\dots,I\Big\}-\sum_{i=1}^IS_i(u_i) : \nonumber\\
&A_iu_i\leq b_i, u_i\in\{0,1\}^T, i=1,\dots,I\Big\}.\nonumber
\end{align}
For fixed $u$, the inner optimization is an economic dispatch (ED) problem, and we refer to it as $ED(u)$. This is a convex programming problem with a concave objective function and affine constraint functions. The Karush-Kuhn Tucker (KKT) conditions are both necessary and sufficient for optimality. Hence, $q$ is an optimal solution to $ED(u)$ if and only if there exists multipliers $\lambda_i\in\mathbb{R}^{m}, i=1,\dots,I$ such that the following conditions are satisfied, including stationarity, primal and dual feasibility and complementarity:
\begin{align}
-\frac{\partial P_t}{\partial q_{it}}(q_t)+\frac{dC_i}{dq_{it}}(q_{it})+\lambda_i^\top g_{it}=0, \ t=1,\dots,T, \ 0\leq \lambda_i\perp h_i-H_iu_i-G_iq_i\geq 0, \ i=1,\dots,I,\label{eq4}
\end{align}
where $g_{it}$ is the $t$-th column of $G_i$ and $x\perp y$ means $x^\top y=0$. This system of constraints forms a mixed complementarity problem (MCP).

\begin{remark}
The complementarity constraints
\begin{align*}
0\leq \lambda_i\perp h_i-H_iu_i-G_iq_i\geq 0,
\end{align*}
can be enforced by the mixed-integer linear programming formulation
\begin{align*}
0\leq \lambda_{i}\leq M_i\delta_i, 0\leq h_i-H_iu_i-G_iq_i\leq M_i(\mathbbm{1}-\delta_i), \ \delta_i\in\{0,1\}^m,
\end{align*}
where $\mathbbm{1}=(1,\dots,1)^\top\in\mathbb{R}^m$, provided there exist a sufficiently large constant $M_i$, see \cite{Fortuny-Amat1981}. 

For instance, if the constraints $G_iq_i+H_iu_i\leq h_i$ simply bound the production of a committed unit, i.e., $\underline q_iu_{it} \leq q_{it}\leq \overline q_iu_{it}, t=1,\dots,T$, where $\underline{q}_i$ and $\overline{q}_i$ are lower and upper bounds, respectively, and $P_t(q_1,\dots,q_I)$ $=(\beta_t-\alpha_t\sum_{i=1}^Iq_{it})\sum_{i=1}^Iq_{it}$ and $C_i(q)=a_iq+b_iq^2$, the KKT conditions are  
\begin{align*}
-\beta_t+2\alpha_t\sum_{k=1}^Iq_{kt}+a_i+2b_iq_{it}+\overline\lambda_{it}-\underline\lambda_{it}=0,\ 
0\leq \overline\lambda_{it}\perp \overline q_iu_{it}-q_{it}\geq 0, \ 
0\leq \underline\lambda_{it}\perp q_{it}-\underline q_iu_{it}\geq 0,\\
t=1,\dots,T, i=1,\dots, I,
\end{align*}
where $\underline\lambda_{it}$ and $\overline\lambda_{it}$ are the multipliers of the lower and upper bounding constraints. Noting that we can take $\overline\lambda_{it}\leq\beta_t-2\alpha_t\sum_{k=1}^Iq_{kt}-a_i-2b_iq_{it}$ and $ \underline\lambda_{it}\leq -\beta_t+2\alpha_t\sum_{k=1}^Iq_{kt}+a_i+2b_iq_{it}$, for instance, $M_i:=\max_t\{\max\{\beta_t-a_i,-\beta_t+2\alpha_t\sum_{k=1}^I\overline q_k+a_i+2b_i\overline q_i,\overline q_i\}\}$.
\end{remark}

\begin{remark}
The Karush-Kuhn Tucker conditions \eqref{eq4} can be replaced by alternative optimality conditions, e.g., primal and dual feasibility and strong duality:
\begin{align*}
\lambda_i, h_i-H_iu_i-G_iq_i\geq 0, \ i=1\dots,I, \ \sum_{t=1}^TP_t(q_t)-\sum_{i=1}^I\sum_{t=1}^TC_i(q_{it})=\sum_{i=1}^I\lambda_i^\top(h_i-H_iu_i).
\end{align*}
\end{remark}

Since $\{u\in\{0,1\}^{IT}: A_iu_i\leq b_i, i=1,\dots,I\}$ contains finitely many vectors, we can enumerate them $u^j,j=1,\dots,J$, where $J\leq 2^{IT}$. For fixed $j$, $q^j$ is an optimal solution to $ED(u^j)$ if and only if the KKT-conditions are satisfied. Evidently, a solution is optimal if and only if it is feasible and its objective function value is at least as high as the value of all solutions $(q^j,u^j), j=1,\dots,J$. 

\begin{proposition}\label{prop1} A solution $(q,u)$ is optimal to \eqref{eq1} if and only if 
\begin{align}
G_iq_i+H_iu_i\leq h_i, \ A_iu_i\leq b_i, \ u_i\in\{0,1\}, \ i=1,\dots,I,\label{eq5}\\
\sum_{t=1}^TP_t(q_{t})-\sum_{i=1}^I\sum_{t=1}^TC_i(q_{it})-\sum_{i=1}^IS_i(u_i)\geq \sum_{t=1}^TP_t(q_{t}^{j})-\sum_{i=1}^I\sum_{t=1}^TC_i(q_{it}^j)-\sum_{i=1}^IS_i(u_i^j), \ j=1,\dots,J,\label{eq2}\\
-\frac{\partial P_t}{\partial q_{it}}(q_t^j)+\frac{dC_i}{dq_{it}}(q_{it}^j)+(\lambda_{i}^j)^\top g_{it}=0, \ t=1,\dots,T, \ 0\leq \lambda_i^j\perp h_i-G_iq_i^j-H_iu_i^j\geq 0,\nonumber\\ 
i=1,\dots,I, j=1,\dots,J.\label{eq3}
\end{align}
\end{proposition}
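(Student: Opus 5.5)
The statement should be read as a system in the unknowns $(q,u)$ together with auxiliary variables $(q^j,\lambda^j)_{j=1}^J$, where $u^j$, $j=1,\dots,J$, are the fixed enumerated feasible commitments. The equivalence then means: $(q,u)$ is optimal to \eqref{eq1} if and only if there exist $(q^j,\lambda^j)_{j=1}^J$ such that \eqref{eq5}--\eqref{eq3} hold. I will prove the two directions separately. Throughout I use the decomposition of \eqref{eq1} into an outer maximisation over the finitely many $u^j$ and inner problems $ED(u^j)$. I also use the KKT equivalence: for fixed $u^j$, $q^j$ is optimal to $ED(u^j)$ iff there exist multipliers $\lambda_i^j$ satisfying \eqref{eq4}, and \eqref{eq3} is exactly \eqref{eq4} with $u=u^j$. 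KKT is sufficient because the objective is concave and the constraints are affine. It is necessary with no further constraint qualification because the constraints are affine.

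\emph{Necessity.} Let $(q,u)$ be optimal to \eqref{eq1}. Feasibility gives \eqref{eq5}. Each $ED(u^j)$ is feasible only if $u^j$ admits some dispatch. So I will first restrict the enumeration to commitments $u^j$ for which $ED(u^j)$ is feasible; commitments with no feasible dispatch contribute nothing to \eqref{eq1} and must be discarded for \eqref{eq3} to be satisfiable. For each remaining $u^j$, $ED(u^j)$ is bounded above by the optimal value of \eqref{eq1} plus $\sum_i S_i(u_i^j)$.

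\textbf{Main obstacle.} Boundedness of $ED(u^j)$ does not by itself give attainment for a general concave objective. I would handle this in one of two ways:
\begin{itemize}
\item assume attainment as part of the standing assumption that \eqref{eq1} has an optimal solution, which is natural since each $ED(u^j)$ is a slice of \eqref{eq1};
\item argue attainment for the quadratic case of Remark 1 via the Frank--Wolfe theorem.
\end{itemize}

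Given attainment, pick an optimal $q^j$ of $ED(u^j)$. KKT then yields $\lambda^j$ with \eqref{eq3}. Since $(q^j,u^j)$ is feasible for \eqref{eq1}, optimality of $(q,u)$ gives \eqref{eq2}.

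\emph{Sufficiency.} Suppose \eqref{eq5}--\eqref{eq3} hold for some $(q^j,\lambda^j)$. Take any feasible $(\tilde q,\tilde u)$ of \eqref{eq1}. Then $\tilde u=u^j$ for some $j$, since $\tilde u$ is a feasible binary commitment. By \eqref{eq3} and KKT sufficiency, $q^j$ is optimal to $ED(u^j)$, so the objective at $(\tilde q,\tilde u)$ is at most the objective at $(q^j,u^j)$. By \eqref{eq2} the latter is at most the objective at $(q,u)$. Finally, $(q,u)$ is feasible by \eqref{eq5}, so it is optimal.
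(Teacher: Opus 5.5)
Your proof is correct and follows the paper's own brief argument: enumerate the feasible commitments $u^j$, use that \eqref{eq3} is exactly the KKT system characterising optimality of $q^j$ in $ED(u^j)$, and reduce optimality in \eqref{eq1} to feasibility plus dominance over all $(q^j,u^j)$. Your two refinements (discarding commitments with infeasible $ED(u^j)$, and requiring each $ED(u^j)$ to attain its supremum) fill genuine gaps in the paper's one-line justification, since without them the ``only if'' direction can fail for a general concave $P_t$; attainment does hold automatically in the quadratic case by Frank--Wolfe, or whenever the dispatch regions are bounded.
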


This system of inequalities and complementarity constraints characterises an optimal solution and will be useful for the equilibrium problem. The complementarity constraints (\ref{eq3}) are clearly nonlinear, but can be enforced by mixed-integer linear programming. The inequalities (\ref{eq2}) may likewise be nonlinear, but are linear in the special case that $P_t(\cdot),C_i(\cdot)$ and $S_i(\cdot)$ are all linear. In the worst case, the number of the inequalities \eqref{eq2} and the number of complementarity constraints \eqref{eq3} are both exponential in $I$ and $T$. The number of complementarity constraints \eqref{eq3}, however, can often be much smaller, as illustrated by the following examples.

\begin{example}\label{ex1} 
If $({\partial P_t}/{\partial q_{it}})(q_{t}):=({\partial P_t}/{\partial q_{it}})(q_{it})$, e.g.\ if $P_t(q_1,\dots,q_I)=p_t\sum_{i=1}^Iq_{it}$ and the producer is a price-taker, then the KKT conditions are separate for each unit and \eqref{eq3} can be replaced by 
\begin{align*}
-\frac{\partial P_t}{\partial q_{it}}(q_{it}^l)+\frac{dC_i}{dq_{it}}(q_{it}^l)+(\lambda_{i}^l)^\top g_{it}=0, \ t=1,\dots,T, \ 0\leq \lambda_i^l\perp h_i-G_iq_i^l-H_iu_i^l\geq 0, \\
l=1,\dots,L_i, i=1,\dots,I,
\end{align*}
where $L_i\leq 2^T$ and $q_{i}^j=q_{i}^l$ if $u_{i}^j=u_{i}^l$  in \eqref{eq5} and \eqref{eq2}. As a result, the number of complementarity constraints increases only linearly with $I$. 
\end{example}

\begin{example}\label{ex2} 
If the constraints $G_iq_i+H_iu_i\leq h_i$ simply bound the production of a committed unit, i.e., $\underline q_iu_{it} \leq q_{it}\leq \overline q_iu_{it}, t=1,\dots,T$, the KKT conditions are separate for each time period. Then, \eqref{eq3} reduces to
\begin{align*}
-\frac{\partial P_t}{\partial q_{it}}(q_{t}^l)+\frac{dC_i}{dq_{it}}(q_{it}^l)+\overline{\lambda}_{it}^l-\underline{\lambda}_{it}^l=0, \ 0\leq \overline{\lambda}_{it}^l\perp \overline{q}_iu_{it}^l-q_{it}^l\geq 0, \ 0\leq \underline{\lambda}_{it}^l\perp q_{it}^l-\underline{q}_iu_{it}^l\geq 0,\\
l=1,\dots,L_t, t=1,\dots,T,i=1,\dots,I, 
\end{align*}
where $L_t\leq 2^I$ and $q_{t}^j=q_{t}^l$ if $u_{t}^j=u_{t}^l$ in \eqref{eq5} and \eqref{eq2}. The number of complementarity constraints increases only linearly with $T$.

If also $({\partial P_t}/{\partial q_{it}})(q_{t}):=({\partial P_t}/{\partial q_{it}})(q_{it})$, then \eqref{eq3} reduces to
\begin{align*}
q_{it}^0=0,\ -\frac{\partial P_t}{\partial q_{it}}(q_{it}^1)+\frac{dC_i}{dq_{it}}(q_{it}^1)+\overline{\lambda}_{it}-\underline{\lambda}_{it}=0, \ 0\leq \overline{\lambda}_{it}\perp \overline{q}_i-q_{it}^1\geq 0, \ 0\leq \underline{\lambda}_{it}\perp q_{it}^1-\underline{q}_i\geq 0,\\
t=1,\dots,T,  i=1,\dots,I,
\end{align*}
and $q_{it}^j=q_{it}^0$ if $u_{it}^j=0$ and $q_{it}^j=q_{it}^1$ if $u_{it}^j=1$  in \eqref{eq5} and \eqref{eq2}. 
\end{example}

\begin{example} 
With ramping restrictions, the constraints $G_iq_i+H_iu_i\leq h_i$ involve consecutive time periods:
\begin{align*}
u_{it}-u_{it-1}=v_{it}-w_{it}, \\
\underline{r}_iu_{it}-(\overline{r}_i-\overline{r}_i^{SU})v_{it}-(\underline{r}_i-\underline{r}_i^{SD})w_{it}\leq q_{it}-q_{it-1}
\leq \overline{r}_iu_{it}-(\overline{r}_i-\overline{r}_i^{SU})v_{it}-(\overline{r}_i-\overline{r}_i^{SD})w_{it},\\[1mm]
t=1,\dots,T,i=1,\dots,I,
\end{align*}
where $v_{it}\in\{0,1\}$ and $w_{it}\in\{0,1\}$ represent the start-up and shut-down decisions, respectively, $\overline{r}_i$ and $\underline{r}_i$ are ramping limits during production and $\overline{r}_i^{SU}, \underline{r}_i^{SU}, \overline{r}_i^{SD}, \underline{r}_i^{SD}$ are specific start-up and shut-down limits. If also $({\partial P_t}/{\partial q_{it}})(q_{t}):=({\partial P_t}/{\partial q_{it}})(q_{it})$, \eqref{eq3} can be replaced by
\begin{align*}
u_{it}-u_{it-1}=v_{it}-w_{it}, \\
q_{it_1}^{t_1,t_2}=q_{it_2}^{t_1,t_2}=0, \ -\frac{\partial P_t}{\partial q_{it}}(q_{it}^{t_1,t_2})+\frac{dC_i}{dq_{it}}(q_{it}^{t_1,t_2})+\overline{\lambda}_{it}-\underline{\lambda}_{it}-\overline{\gamma}_{it}+\underline{\gamma}_{it}=0,\\ 
0\leq \overline{\lambda}_{it}\perp  q_{it-1}^{t_1,t_2}-q_{it}^{t_1,t_2}+\overline{r}_iu_{it}-(\overline{r}_i-\overline{r}_i^{SU})v_{it}-(\overline{r}_i-\overline{r}_i^{SD})w_{it}\geq 0,\\[1mm]
0\leq \underline{\lambda}_{it}\perp q_{it}^{t_1,t_2}-q_{it-1}^{t_1,t_2}-\underline{r}_iu_{it}+(\overline{r}_i-\overline{r}_i^{SU})v_{it}+(\underline{r}_i-\underline{r}_i^{SD})w_{it}\geq 0,\\[1mm]
0\leq \overline{\gamma}_{it}\perp  q_{it}^{t_1,t_2}-q_{it+1}^{t_1,t_2}+\underline{r}_iu_{it+1}-(\overline{r}_i-\overline{r}_i^{SU})v_{it+1}-(\underline{r}_i-\underline{r}_i^{SD})w_{it+1}\geq 0, \\[1mm]
0\leq \underline{\gamma}_{it}\perp q_{it+1}^{t_1,t_2}-q_{it}^{t_1,t_2}-\underline{r}_iu_{it+1}+(\overline{r}_i-\overline{r}_i^{SU})v_{it+1}+(\underline{r}_i-\underline{r}_i^{SD})w_{it+1}\geq 0\\[1mm]
t=t_1+1,\dots,t_2-1,t_1=0,\dots,T+1,t_2=t_1+1,\dots,T, i=1,\dots,I,
\end{align*}
where $\underline{\gamma}_{it}$ and $\overline{\gamma}_{it}$ are multipliers of the ramping constraints, and $q_{it}^j=q_{it}^{t_1,t_2}, t=t_1,\dots,t_2$ if $u_{it}^j=u_{it}^{t_1,t_2}, t=t_1,\dots,t_2$. The number of complementarity constraints is cubic in $T$.
\end{example}

We proceed to derive valid inequalities that will be valuable for the characterisation of an equilibrium. Note that $\sum_{(i,t):u_{it}^j=1}(1-u_{it})+\sum_{(i,t):u_{it}^j=0}u_{it}=0$ for $u=u^j$ and $\sum_{(i,t):u_{it}^j=1}(1-u_{it})+\sum_{(i,t):u_{it}^j=0}u_{it}\geq 1$ for $u\neq u^j$. Thus, we can obtain valid inequalities for (\ref{eq5})-(\ref{eq3}) of the form:
\begin{align}
&q_{it}^j-\underline{M}_{ij}\Big(\sum_{(k,s):u_{ks}^j=1}(1-u_{ks})+\sum_{(k,s):u_{k,s}^j=0}u_{ks}\Big)\leq q_{it}\nonumber\\
&\leq q_{it}^j+\overline{M}_{ij}\Big(\sum_{(k,s):u_{ks}^j=1}(1-u_{ks})+\sum_{(k,s):u_{k,s}^j=0}u_{ks}\Big), \ t=1,\dots,T,i=1,\dots,I, j=1,\dots,J\label{eq10}
\end{align}
for sufficiently large constants $\underline{M}_{ij}$ and $\overline{M}_{ij}$. It should be remarked that these constraints ensure that $q=q^j$ if $u=u^j$.

We determine specific constants, assuming that $\overline{q}_iu_{it}\leq q_{it}\leq \overline{q}_iu_{it}$. If $u=u^j$ and $u_{it}^j=0$, then $u_{it}=0$ and $q_{it}^j=0=q_{it}$. Thus, we only impose (\ref{eq10}) for $u_{it}^j=1$. 

\begin{proposition}\label{prop5}
Assume that the constraints $G_iq_i+H_iu_i\leq h_i$ include $\overline{q}_iu_{it}\leq q_{it}\leq \overline{q}_iu_{it}$. The following are valid inequalities for (\ref{eq5})-(\ref{eq3}):
\begin{align*}
&q_{it}^j-(\overline{q}_i-\underline{q}_i)\Big(\sum_{(k,s)\neq (i,t):u_{ks}^j=1}(1-u_{ks})+\sum_{(k,s):u_{k,s}^j=0}u_{ks}\Big)-\overline{q}_i(1-u_{it})\leq q_{it}\nonumber\\
&\leq q_{it}^j+(\overline{q}_i-\underline{q}_i)\Big(\sum_{(k,s)\neq (i,t):u_{ks}^j=1}(1-u_{ks})+\sum_{(k,s):u_{k,s}^j=0}u_{ks}\Big)-\underline{q}_i(1-u_{it}).
\end{align*}
for $t=1,\dots,T,i=1,\dots,I, j=1,\dots,J$ with $u_{it}^j=1$.
\end{proposition}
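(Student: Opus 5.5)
Note first that the assumption $\overline{q}_iu_{it}\leq q_{it}\leq \overline{q}_iu_{it}$ is presumably a typo for $\underline{q}_iu_{it}\leq q_{it}\leq \overline{q}_iu_{it}$; we read it that way, so that $0\le q_{it}\le\overline q_i$ always, and $\underline q_i\le q_{it}$ when $u_{it}=1$. Fix $j$ and $(i,t)$ with $u_{it}^j=1$. Write $D:=\sum_{(k,s)\neq (i,t):u_{ks}^j=1}(1-u_{ks})+\sum_{(k,s):u_{ks}^j=0}u_{ks}$, a nonnegative integer for binary $u$. Validity means that every $(q,u)$ satisfying \eqref{eq5}--\eqref{eq3}, together with the auxiliary $q^j$, obeys both inequalities. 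The plan is a case split on $u_{it}\in\{0,1\}$ and on whether $D=0$ or $D\ge1$.

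\emph{Case $D=0$ and $u_{it}=1$.} Then $u=u^j$. I would argue $q_{it}=q_{it}^j$, so both inequalities hold with equality (all correction terms vanish). To justify $q=q^j$: by \eqref{eq3}, $q^j$ is optimal for $ED(u^j)$. By \eqref{eq2} with this $j$, the objective at $(q,u^j)$ is at least that at $(q^j,u^j)$, and $q$ is feasible for $ED(u^j)$ by \eqref{eq5}. Hence $q$ is also optimal for $ED(u^j)$. This is the main obstacle: optimal dispatch need not be unique. I would handle it either by assuming strict concavity of the ED objective (e.g.\ $b_i>0$ or $\alpha_t>0$ as in the Remark), or by observing that, since $q^j$ is only defined through \eqref{eq3}, one may select $q^j:=q$ whenever $u=u^j$. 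That selection is consistent with \eqref{eq2}--\eqref{eq3} because $q$ then satisfies the KKT conditions of $ED(u^j)$. I would state this convention, in line with the paper's remark that \eqref{eq10} "ensures $q=q^j$ if $u=u^j$".

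\emph{Case $D=0$ and $u_{it}=0$.} Here $q_{it}=0$ and $q_{it}^j\in[\underline q_i,\overline q_i]$ because $u^j_{it}=1$. The lower inequality becomes $q_{it}^j-\overline q_i\le 0$, which is true. The upper inequality becomes $0\le q_{it}^j-\underline q_i$, which is also true.

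\emph{Case $D\ge1$.} Use the crude bounds $q_{it},q_{it}^j\in[0,\overline q_i]$, with $q_{it}\ge\underline q_i$ if $u_{it}=1$. If $u_{it}=1$, we need $q_{it}^j-(\overline q_i-\underline q_i)D\le q_{it}$. The left-hand side is at most $\overline q_i-(\overline q_i-\underline q_i)=\underline q_i\le q_{it}$. Symmetrically, $q_{it}\le\overline q_i\le q_{it}^j+(\overline q_i-\underline q_i)D$, since $q_{it}^j\ge\underline q_i$. If $u_{it}=0$, then $q_{it}=0$. The lower bound follows from the $-\overline q_i$ term, since $q_{it}^j\le\overline q_i$. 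The upper bound needs $0\le q_{it}^j+(\overline q_i-\underline q_i)D-\underline q_i$, which holds since $q^j_{it}\ge\underline q_i$ and $\overline q_i\ge\underline q_i$.

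All four cases are routine once the nonuniqueness issue in the first case is settled, so that is where I would spend the care.
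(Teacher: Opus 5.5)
Your proof is correct and follows essentially the paper's route. Both arguments split into the cases $u_{it}=1,\ u=u^j$; $u_{it}=1,\ u\neq u^j$; and $u_{it}=0$, and they use only the facts that $\underline q_i\le q^j_{it}\le\overline q_i$, that $q_{it}=0$ when $u_{it}=0$, and that $\underline q_i\le q_{it}\le\overline q_i$ when $u_{it}=1$. The paper phrases this as starting from generic big-M constants $M_1,\dots,M_4,\underline M_{ij},\overline M_{ij}$ and reading off sufficient values, whereas you verify the given constants directly; your partition by whether $D=0$ amounts to the same case structure. The paper simply takes $q_{it}=q^j_{it}$ for granted when $u=u^j$. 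You justify that step, either by strict concavity or by re-selecting $q^j:=q$, which is legitimate because $q$ is then also optimal for $ED(u^j)$ and so satisfies its KKT system. That is a worthwhile tightening of the paper's argument.
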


The proof can be found in Appendix \ref{app:prop5}.

\begin{example}
Assume that the constraints $G_iq_i+H_iu_i\leq h_i$ only include $\overline{q}_iu_{it}\leq q_{it}\leq \overline{q}_iu_{it}$ and $({\partial P_t}/{\partial q_{it}})(q_{t}):=({\partial P_t}/{\partial q_{it}})(q_{it})$. Then, the following are stronger valid inequalities:
\begin{align*}
q_{it}^1-\overline{q}_i(1-u_{it})\leq q_{it}\leq q_{it}^1-\underline{q}_i(1-u_{it}), \ t=1,\dots,T, i=1,\dots,I.
\end{align*}
\end{example} 

\section{Nash equilibrium}

We proceed to demonstrate the use of the optimality conditions of (\ref{eq5})-(\ref{eq3}) to characterise the unit commitment constrained Nash equilibrium. We consider competing producers making simultaneous decisions in the market. For ease of exposition, each producer operates a single generating unit (we relax this assumption in the numerical experiments). The unit commitment constrained Nash equilibrium is defined as follows:

\begin{definition}
A unit commitment constrained Nash equilibrium is given by $(q_1,u_1,\dots,q_I,u_I)$ such that $(q_i,u_i)$ is an optimal solution to  
\begin{align}
\max_{q_i,u_i} \Big\{&\sum_{t=1}^TP_{it}(q_{it},q_{-it})-\sum_{t=1}^TC_i(q_{it})-S_i(u_i) :\nonumber\\
&G_iq_i+H_iu_i\leq h_i, A_iu_i\leq b_i, u_i\in\{0,1\}^T\Big\},\ i=1,\dots,I.\label{eq15}
\end{align}
given $q_{-i}=(q_1,\dots,q_{i-1},q_{i+1},\dots,q_T)$ for all $i=1,\dots,I$. Equivalently, $G_iq_i+H_iu_i\leq h_i, \ A_iu_i\leq b_i, \ u_i\in\{0,1\}^T, \ i=1,\dots,I$ and $(q_i,u_i)$ is the best response of unit $i$ given $q_{-i}$, i.e., 
\begin{align*}
\sum_{t=1}^TP_{it}(q_{it},q_{-it})-\sum_{t=1}^TC_i(q_{it})-S_i(u_i)\geq \sum_{t=1}^TP_{it}(\bar q_{it},q_{-it})-\sum_{t=1}^TC_i(\bar q_{it})-S_i(\bar u_i),\\
\forall (\bar q_i,\bar u_i)\in\{(q_i,u_i)\in\mathbb{R}^{T}\times\{0,1\}^T:G_iq_i+H_iu_i\leq h_i, A_iu_i\leq b_i\}, i=1,\dots,I.
\end{align*}
\end{definition}

By Proposition \ref{prop1}, this is equivalent to the system of constraints:
\begin{proposition}
$(q_1,u_1,\dots,q_I,u_I)$ is a unit commitment constrained Nash equilibrium if and only if
\begin{align}
G_iq_i+H_iu_i\leq h_i, \ A_iu_i\leq b_i, \ u_i\in\{0,1\}^T, \ i=1,\dots,I,\label{eq12}\\
\sum_{t=1}^TP_{it}(q_{it},q_{-it})-\sum_{t=1}^TC_i(q_{it})-S_i(u_i)\geq \sum_{t=1}^TP_{it}(q_{it}^l,q_{-it})-\sum_{t=1}^TC_i(q_{it}^l)-S_i(u_i^l),\nonumber\\ 
l=1,\dots,L_i,i=1,\dots,I,\label{eq17}\\
-\frac{\partial P_{it}}{\partial q_{it}}(q_{it}^l,q_{-it})+\frac{dC_i}{dq_{it}}(q_{it}^l)+(\lambda_{i}^l)^\top g_{it}=0, \ t=1,\dots,T, \ 0\leq \lambda_i^l\perp h_i-G_iq_i^l-H_iu_i^l\geq 0,\nonumber\\ 
l=1,\dots,L_i, i=1,\dots,I,\label{eq13}\\
q_{it}^l-\underline{M}_{il}\Big(\sum_{s:u_{is}^l=1}(1-u_{is})+\sum_{s:u_{is}^l=0}u_{is}\Big)\leq q_{it}\leq q_{it}^l+\overline{M}_{il}\Big(\sum_{s:u_{is}^l=1}(1-u_{is})+\sum_{s:u_{is}^l=0}u_{is}\Big), \nonumber\\ 
t=1,\dots,T,l=1,\dots,L_i,i=1,\dots,I.\label{eq27}
\end{align}
\end{proposition}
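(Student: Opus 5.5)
The proof applies Proposition \ref{prop1} to each producer's best-response problem \eqref{eq15} separately, with $q_{-i}$ held fixed. It then argues that the added constraints \eqref{eq27} neither cut off any equilibrium nor admit any non-equilibrium.

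For fixed $i$, I enumerate the feasible commitments $u_i^l$, $l=1,\dots,L_i$, of $\{u_i\in\{0,1\}^T:A_iu_i\leq b_i\}$, so $L_i\leq 2^T$. Given $q_{-i}$, problem \eqref{eq15} has exactly the structure of \eqref{eq1} with a single unit. The revenue $P_{it}(\cdot,q_{-it})$ is concave and continuously differentiable in $q_{it}$, and the constraints are the same. Hence the economic dispatch problem $ED(u_i^l)$ (given $q_{-i}$) is convex with affine constraints. Its KKT system is \eqref{eq13}, which is necessary and sufficient for $q_i^l$ to be an optimal dispatch. Proposition \ref{prop1} then says that $(q_i,u_i)$ is optimal in \eqref{eq15} if and only if \eqref{eq12} (restricted to $i$), \eqref{eq17} and \eqref{eq13} hold. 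Taking the conjunction over $i$ gives the equivalence for \eqref{eq12}--\eqref{eq13}, by the definition of a Nash equilibrium. The only subtlety is that $q_i^l$ is now a function of $q_{-i}$, i.e.\ of the equilibrium point itself. This is harmless, because the system is read for a given candidate $(q,u)$: the auxiliary variables $(q_i^l,\lambda_i^l)$ exist exactly when each $ED(u_i^l)$ given $q_{-i}$ is solvable. I assume this, as the paper does when assuming each producer's problem is feasible and bounded.

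Next I show that \eqref{eq27} is valid, i.e.\ that it can be added without changing the solution set. The distance term $\sum_{s:u_{is}^l=1}(1-u_{is})+\sum_{s:u_{is}^l=0}u_{is}$ is $0$ if $u_i=u_i^l$ and at least $1$ otherwise.

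\textbf{Case $u_i\neq u_i^l$.} Here \eqref{eq27} is implied by feasibility, provided $\underline M_{il},\overline M_{il}$ are large enough. For instance, take them at least the diameter of the bounded dispatch range of unit $i$, as in Proposition \ref{prop5} with $\overline q_i-\underline q_i$ or simply $\overline q_i$.

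\textbf{Case $u_i=u_i^l$.} Here \eqref{eq27} forces $q_i=q_i^l$. I must check that this does not exclude an equilibrium. Given an equilibrium $(q,u)$, with $u_i=u_i^l$ for some $l$, $q_i$ is itself optimal for $ED(u_i^l)$ given $q_{-i}$. So I may choose the multiplier system for index $l$ with $q_i^l:=q_i$, and \eqref{eq13} and \eqref{eq17} remain satisfied. Conversely, adding constraints can only shrink the solution set, so sufficiency is inherited from the first part.

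The main obstacle is this last freedom-of-choice step. Because $q_i^l$ is an auxiliary variable rather than uniquely determined, one must argue that the existential quantifier over $(q_i^l,\lambda_i^l)$ can be resolved compatibly with \eqref{eq27}. ED optimal solutions need not be unique when $C_i$ is merely convex, so the argument selects $q_i^l=q_i$ rather than relying on uniqueness. The second point to handle carefully is boundedness of the feasible $q_i$, so that finite big-M constants exist.
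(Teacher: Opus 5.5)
Your proof is correct and follows the paper's route. The paper invokes Proposition \ref{prop1} for each producer's best-response problem given $q_{-i}$ and appends \eqref{eq27} as valid inequalities. Your explicit choice $q_i^l:=q_i$ for the active commitment, rather than relying on uniqueness of the dispatch, and your solvability assumption on every $ED(u_i^l)$ make precise what the paper leaves implicit.

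One small correction: in the plain form \eqref{eq27} the big-M constants must be at least $\overline q_i$. For example, $u_{it}=0$, $u_{it}^l=1$ with all other periods equal gives distance $1$ and requires $\underline M_{il}\geq q_{it}^l$. So $\overline q_i-\underline q_i$ suffices only with the extra $(1-u_{it})$ terms of Proposition \ref{prop5}.
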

Note that we explicitly ensure that $q_i=q_i^l$ if $u_i=u_i^l$ by the inequalities (\ref{eq27}), avoiding the need to evaluate all possible combinations $(q_1^{l_1},u_1^{l_1},\dots,q_I^{l_I},u_I^{l_I}), l_1=1,\dots,L_1,\dots,l_I=1,\dots,L_I$. The total number of constraints in the equilibrium problem is proportional to $I2^T$, i.e, it increases linearly with $I$.

\begin{remark}
Consider the standard assumptions of an inverse affine demand curve, i.e., $P_{it}(q_t)=p_t(d_t)q_{it}$ with $p_t(d_t)=\beta_t-\alpha_t d_t, \alpha_t, \beta_t\geq 0$ and $d_t=\sum_{i=1}^Iq_{it}$. Let the unit commitment be fixed. Under Cournot competition, $({\partial p_{t}}/{\partial q_{it}})(d_t)=-\alpha_t$ such that $({\partial P_{it}}/{\partial q_{it}})(q_{it},q_{-it})=\beta_t-\alpha_t\sum_{k=1}^Iq_{kt}-\alpha_tq_{it}$. Under perfect competition, $({\partial p_{t}}/{\partial q_{it}})(d_t)=0$ such that $({\partial P_{it}}/{\partial q_{it}})(q_{it},q_{-it})=\beta_t-\alpha_t\sum_{k=1}^Iq_{kt}$. 
\end{remark}

\begin{remark} 
By the definition of Huppmann and Siddiqui (2018), a unit commitment constrained Nash equilibrium is given by $(q_1,u_1,\dots,q_I,u_I)$ such that i) $q_i$ is the best response of unit $i$ given $u_i$ and $q_{-i}$ for all $i=1,\dots,I$, i.e.,  
\begin{align*}
\sum_{t=1}^TP_{it}(q_{it},q_{-it})-\sum_{t=1}^TC_i(q_{it})-S_i(u_i)\geq \sum_{t=1}^TP_{it}(\bar q_{it},q_{-it})-\sum_{t=1}^TC_i(\bar q_{it})-S_i(u_i),\nonumber\\ 
\forall \bar q_i\in\{q_i\in\mathbb{R}^{T}:G_iq_i+H_iu_i\leq h_i\},i=1,\dots,I,
\end{align*}
and ii)
\begin{align*}
\sum_{t=1}^TP_{it}(q_{it},q_{-it})-\sum_{t=1}^TC_i(q_{it})-S_i(u_i)\geq \sum_{t=1}^TP_{it}(q_{it}^l,q_{-it})-\sum_{t=1}^TC_i(q_{it}^l)-S_i(u_i^l),\\
\forall u_i^l\in\{\{0,1\}^T:A_iu_i\leq b_i\}, i=1,\dots,I,
\end{align*}
where $q_i^l$ is the best response of unit $i$ given $u_i^l$
\begin{align*}
\sum_{t=1}^TP_{it}(q_{it}^l,q_{-it})-\sum_{t=1}^TC_i(q_{it}^l)-S_i(u_i^l)\geq \sum_{t=1}^TP_{it}(\bar q_{it},q_{-it})-\sum_{t=1}^TC_i(\bar q_{it})-S_i(u_i^l),\nonumber\\ 
\forall \bar q_i\in\{q_i\in\mathbb{R}^{T}:G_iq_i+H_iu_i^l\leq h_i\},i=1,\dots,I.
\end{align*}
Conditions ii) are equivalent to \eqref{eq17}-\eqref{eq13}, whereas condition i) is redundant with the constraints (\ref{eq27}) ensuring that $q_i=q_i^l$ if $u_i=u_i^l$. Thus, our definition of the unit commitment constrained Nash equilibrium is consistent with this.
\end{remark}

We proceed to study existence and uniqueness of the unit commitment constrained equilibrium. 

\begin{remark}
To analyse existence, consider the equilibrium problem that arises for fixed unit commitment of all units, which we refer to as the economic dispatch equilibrium. For fixed unit commitment $(u_1,\dots,u_I)$, assume that
\begin{enumerate}
\item[i)] the function $(q_i,q_{-i})\rightarrow\sum_{t=1}^TP_{it}(q_{it},q_{-it})-\sum_{t=1}^TC_i(q_{it})$ is continuous in $q_i$ and $q_{-i}=(q_1,\dots,q_{i-1},q_{i+1},\dots,q_T)$ and convex in $q_i$,
\item[ii)] the set $\{q_i\in\mathbb{R}^{T}:G_iq_i\leq h_i-H_iu_i\}$ is a non-empty and bounded polyhedron,
\end{enumerate}
for all $i=1,\dots,I$. Then, the economic dispatch equilibrium $(q_1,\dots,q_I)$ exists (Theorem 1.2 of \cite{Fudenberg1991}).

For any economic dispatch equilibrium $(q_1,\dots,q_I)$, we can find $(u_1,\dots,u_I)$ such that $q_i$ is the best response of unit $i$ given $u_i$ and $q_{-i}$ for all $i=1,\dots,I$. However, $(q_1,u_1,\dots,q_I,u_I)$ does not necessarily constitute an equilibrium, since we may not have that 
\begin{align*}
\sum_{t=1}^TP_{it}(q_{it},q_{-it})-\sum_{t=1}^TC_i(q_{it})-S_i(u_i)\geq \sum_{t=1}^TP_{it}(\bar q_{it},q_{-it})-\sum_{t=1}^TC_i(\bar q_{it})-S_i(\bar u_i),\\
\forall (\bar q_i,\bar u_i)\in\{(q_i,u_i)\in\mathbb{R}^{T}\times\{0,1\}^T:G_iq_i+H_iu_i\leq h_i, A_iu_i\leq b_i\}, i=1,\dots,I.
\end{align*}

\cite{Harwood2024} provide an example of a unit commitment constrained perfectly competitive Nash equilibrium, for which an equilibrium does not exist. Below, however, we provide sufficient conditions for an equilibrium to exist and show that these are satisfied for the unit commitment constrained Cournot-Nash equilibrium. 
\end{remark}

Consider the system-wide unit commitment problem: 
\begin{align}
\max_{q,u} \Big\{&\sum_{t=1}^TF_t(q_t)-\sum_{i=1}^I\sum_{t=1}^TC_i(q_{it})-\sum_{i=1}^IS_i(u_i) :\nonumber\\
&G_iq_i+H_iu_i\leq h_i, A_iu_i\leq b_i, \ u_i\in\{0,1\}^T,\ i=1,\dots,I\Big\},\label{eq14}
\end{align}
where $F_t:\mathbb{R}^I\rightarrow\mathbb{R}$ is a concave and continuously differentiable function. 

For fixed unit commitment and under mild assumptions, there exist functions $F_t(\cdot), t=1,\dots,T$ such that there is a one-to-one correspondence between economic dispatch equilibria and optimal solutions to the system-wide problem, see \cite{Egging2020}. For instance, under perfect and Cournot competition, these functions capture consumer surplus and market power adjusted consumer surplus, respectively. The following example demonstrates that this may not be the case for the unit commitment constrained equilibrium. If the system-wide problem maximises market power adjusted social welfare, there may be multiple equilibria, some of which may not be optimal solutions.

\begin{example}\label{ex3}
Consider the Cournot-Nash equilibrium problem
\begin{align}
\max_{q_i,u_i} \Big\{&(\beta-\alpha(q_1+q_2))q_i-c_iq_i-s_iu_i : 0\leq q_i\leq \overline q_iu_i, u_i\in\{0,1\}\Big\},i=1,2,\nonumber
\end{align}
and the system-wide maximisation of market power adjusted social welfare 
\begin{align}
\max_{q,u} \Big\{&\int_0^{q_1+q_2}(\beta-\alpha x)dx-\frac{1}{2}\alpha q_1^2-\frac{1}{2}\alpha q_1^2-c_1q_1-c_2q_2-s_1u_1-s_2u_2 : 0\leq q_i\leq \overline q_iu_i, u_i\in\{0,1\}, i=1,2\Big\}.\nonumber
\end{align}
For fixed unit commitment, an economic dispatch equilibrium is an optimal solution to the system-wide problem and vice versa. This does not hold in general for the unit commitment constrained equilibrium and system-wide optimisation. For instance, under Cournot competition and $\beta=5, \alpha=1, c_1=c_2=1, \overline{q}_1=\overline{q}_2=3, s_1=3, s_2=2$, $(q_1,u_1,q_2,u_2)=(2,1,0,0)$ is an equilibrium, but $(q_1,u_1,q_2,u_2)=(0,0,2,1)$ is the unique optimal solution to the system-wide problem. Thus, an equilibrium cannot necessarily be obtained by solving this problem. Nevertheless, the optimal solution $(q_1,u_1,q_2,u_2)=(0,0,2,1)$ is another equilibrium. 

\end{example}

In Example \ref{ex3}, an optimal solution to the system-wide problem is an equilibrium. This holds more generally under certain assumptions, i.e., when changes in the system-wide function $F_t(\cdot)$ with respect to the production of producer $i$ are the same as changes in the revenue $P_{it}(\cdot)$ function of this producer, for all $t=1,\dots,T$.

\begin{proposition}\label{prop3}
The following holds:
\begin{itemize}
\item[i)] If $F_t(q_{it},q_{-it})-F_t(x_i,q_{-it})=P_{it}(q_{it},q_{-it})-P_{it}(x_i,q_{-it})$ for all $q_t\in\mathbb{R}^{I}, x_i\in\mathbb{R}$ and $t=1,\dots,T, i=1,\dots,I$, then an optimal solution to the optimization problem (\ref{eq14}) is a solution to (\ref{eq15}), i.e., an equilibrium.
\item[ii)] Assume further that $P_{it}(q_{it},q_{-it})-P_{it}(\bar q_{it},q_{-it})=P_{it}(q_{it})-P_{it}(\bar q_{it}), \ t=1,\dots,T, i=1,\dots,I$. 
If $F_t(q_t)-F_t(x)=\sum_{i=1}^I(P_{it}(q_{it})-P_{it}(x_i))$ for all $q_t\in\mathbb{R}^{I}, x\in\mathbb{R}^I$ and $t=1,\dots,T$, then a solution to (\ref{eq15}), i.e., an equilibrium, is an optimal solution to the optimization problem (\ref{eq14}).
\end{itemize}
\end{proposition}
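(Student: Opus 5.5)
Both parts follow by comparing objective values of (\ref{eq14}) and (\ref{eq15}) at the candidate point and at a unilateral deviation. Feasibility of each player is already contained in the feasible set of (\ref{eq14}), and the constraints of (\ref{eq14}) are separable across units. Hence a unilateral deviation of unit $i$ is feasible for (\ref{eq15}) if and only if the resulting profile is feasible for (\ref{eq14}).

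For i), let $(q,u)$ be optimal to (\ref{eq14}) and fix $i$. Take any $(\bar q_i,\bar u_i)$ with $G_i\bar q_i+H_i\bar u_i\leq h_i$, $A_i\bar u_i\leq b_i$, $\bar u_i\in\{0,1\}^T$. Form the profile $(\bar q_i,q_{-i},\bar u_i,u_{-i})$; it is feasible for (\ref{eq14}) by separability. Optimality of $(q,u)$ gives
\begin{align*}
\sum_{t=1}^T\big(F_t(q_{it},q_{-it})-F_t(\bar q_{it},q_{-it})\big)-\sum_{t=1}^T\big(C_i(q_{it})-C_i(\bar q_{it})\big)-S_i(u_i)+S_i(\bar u_i)\geq 0,
\end{align*}
since the cost terms of units $k\neq i$ cancel. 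By the hypothesis of i), applied with $x_i=\bar q_{it}$, each $F_t$ difference equals the corresponding $P_{it}$ difference. The inequality is then exactly the best-response inequality in the definition of equilibrium for unit $i$. Since $i$ was arbitrary, $(q,u)$ solves (\ref{eq15}) for every $i$.

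For ii), let $(q,u)$ be an equilibrium and let $(\bar q,\bar u)$ be any feasible point of (\ref{eq14}). Under the separability assumption $P_{it}(q_{it},q_{-it})-P_{it}(\bar q_{it},q_{-it})=P_{it}(q_{it})-P_{it}(\bar q_{it})$, unit $i$'s best-response inequality against the deviation $(\bar q_i,\bar u_i)$, which is feasible for unit $i$ by separability of the constraints, reads
\begin{align*}
\sum_{t=1}^T\big(P_{it}(q_{it})-P_{it}(\bar q_{it})\big)-\sum_{t=1}^T\big(C_i(q_{it})-C_i(\bar q_{it})\big)-S_i(u_i)+S_i(\bar u_i)\geq 0.
\end{align*}
Summing over $i=1,\dots,I$ and applying the hypothesis $F_t(q_t)-F_t(\bar q_t)=\sum_i(P_{it}(q_{it})-P_{it}(\bar q_{it}))$ with $x=\bar q_t$ shows that the objective of (\ref{eq14}) at $(q,u)$ is at least its value at $(\bar q,\bar u)$. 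Hence $(q,u)$ is optimal.

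The only delicate step is in ii): the deviation comparison must be legitimate even though the other players' quantities differ between $(q,u)$ and $(\bar q,\bar u)$. This is exactly what the additional separability assumption on $P_{it}$ provides, since it makes each player's revenue difference independent of $q_{-it}$, so the $I$ unilateral inequalities can be summed. I would state this explicitly and note that part ii) uses the separability assumption together with its own hypothesis on $F_t$, without needing the hypothesis of i). Apart from that, the argument is direct bookkeeping of cancelling terms.
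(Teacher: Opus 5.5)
Your proposal is correct and follows essentially the same route as the paper's proof. In i) you restrict the system-wide optimality inequality to unilateral deviations and use the hypothesis to replace the $F_t$ differences by $P_{it}$ differences; in ii) you sum the players' best-response inequalities and apply the hypothesis on $F_t$, stating the separability of the feasible set and the summation step more explicitly than the paper does.
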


\emph{Proof:}
To show i), let $(q_1,u_1,\dots,q_I,u_I)$ be an optimal solution to (\ref{eq14}). Then, $G_iq_i+H_iu_i\leq h_i, \ A_iu_i\leq b_i, \ u_i\in\{0,1\}^T,\ i=1,\dots,I$ and 
\begin{align*}
\sum_{t=1}^TF_t(q_t)-\sum_{i=1}^I\sum_{t=1}^TC_i(q_{it})-\sum_{i=1}^IS_i(u_i)\geq \sum_{t=1}^TF_t(\bar q_t)-\sum_{i=1}^I\sum_{t=1}^TC_i(\bar q_{it})-\sum_{i=1}^IS_i(\bar u_i), \\ \forall (\bar q,\bar u)\in\{(q,u)\in\mathbb{R}^{TI}\times\{0,1\}^{TI}:G_iq_i+H_iu_i\leq h_i, \ A_iu_i\leq b_i,\ i=1,\dots,I\}.
\end{align*}
Let $i\in\{1,\dots,I\}$ and $\bar u_{-i}=u_{-i}$ and $\bar q_{-i}=q_{-i}$. Then, by the assumptions of Proposition \ref{prop3} i),
\begin{align*}
\sum_{t=1}^TP_{it}(q_{it},q_{-it})-\sum_{t=1}^TC_i(q_{it})-S_i(u_i)\geq \sum_{t=1}^TP_{it}(\bar q_{it},q_{-it})-\sum_{t=1}^TC_i(\bar q_{it})-S_i(\bar u_i), \\ \forall (\bar q_i,\bar u_i)\in\{(q_i,u_i)\in\mathbb{R}^{T}\times\{0,1\}^{T}:G_iq_i+H_iu_i\leq h_i, \ A_iu_i\leq b_i\}.
\end{align*}
Thus, $(q_1,u_1,\dots,q_I,u_I)$ is an equilibrium.

For ii), let $(q_1,u_1,\dots,q_I,u_I)$ be an equilibrium. Then, $G_iq_i+H_iu_i\leq h_i, \ A_iu_i\leq b_i, \ u_i\in\{0,1\}^T,\ i=1,\dots,I$ and
\begin{align*}
\sum_{t=1}^TP_{it}(q_{it})-\sum_{t=1}^TC_i(q_{it})-S_i(u_i)\geq \sum_{t=1}^TP_{it}(\bar q_{it})-\sum_{t=1}^TC_i(\bar q_{it})-S_i(\bar u_i), \\ \forall (\bar q_i,\bar u_i)\in\{(q_i,u_i)\in\mathbb{R}^{T}\times\{0,1\}^{T}:G_iq_i+H_iu_i\leq h_i, \ A_iu_i\leq b_i\}.
\end{align*}
for all $i=1,\dots,I$. Hence, by the assumptions of Proposition \ref{prop3} ii),
\begin{align*}
\sum_{t=1}^TF_t(q_t)-\sum_{i=1}^I\sum_{t=1}^TC_i(q_{it})-\sum_{i=1}^IS_i(u_i)\geq \sum_{t=1}^TF_t(\bar q_t)-\sum_{i=1}^I\sum_{t=1}^TC_i(\bar q_{it})-\sum_{i=1}^IS_i(\bar u_i), \\ \forall (\bar q,\bar u)\in\{(q,u)\in\mathbb{R}^{TI}\times\{0,1\}^{TI}:G_iq_i+H_iu_i\leq h_i, \ A_iu_i\leq b_i,\ i=1,\dots,I\}.
\end{align*}
As a result, $(q_1,u_1,\dots,q_I,u_I)$ is an optimal solution (\ref{eq14}).

\begin{remark}
The assumptions of Proposition \ref{prop3} i) are fulfilled under standard assumptions of an inverse affine demand curve, i.e., $P_{it}(q_t)=p_t(d_t)q_{it}$ with $p_t(d_t)=\beta_t-\alpha_t d_t, \alpha_t, \beta_t\geq 0$ and $d_t=\sum_{i=1}^Iq_{it}$, in Cournot-Nash equilibrium. Indeed, $P_{it}(q_{it},q_{-it})=(\beta_t-\alpha_t\sum_{k=1}^Iq_{kt})q_{it}$, and if we let
\begin{align*}
F_{t}(q_t)=\int_0^{d_t}p_t(x)dx-(1/2)\alpha_t \sum_{i=1}^Iq_{it}^2, 
\end{align*}
which is consumer surplus minus a market power adjustment term, then
\begin{align*}
F_t(q_{it},q_{-it})-F_t(x_i,q_{-it})=\beta_t(q_{it}-x_i)-(1/2)\alpha_t\Big(q_{it}^2-x_i^2+2(q_{it}-x_i)\sum_{k\neq i}^Iq_{kt}\Big)-(1/2)\alpha_t(q_{it}^2-x_i^2)\\
=\beta_t(q_{it}-x_i)-\alpha_t\Big(q_{it}\sum_{k=1}^Iq_{kt}-x_i\Big(\sum_{k\neq i}^Iq_{kt}+x_{it}\Big)\Big)=P_{it}(q_{it},q_{-it})-P_{it}(x_i,q_{-it}).
\end{align*}

Under perfect competition, it is not clear how to define $P_t(q_t)$ and $F_{t}(q_t)$. If we let
\begin{align*}
F_{t}(q_t)=\int_0^{d_t}p_t(x)dx,
\end{align*}
i.e., consumer surplus, an optimal solution to the system-wide problem is not necessarily an equilibrium, see e.g.\ the example of \cite{Harwood2024}. An exception is if demand is infinitely elastic, i.e., if $\alpha_t=0, \ t=1,\dots,T$.

Even in Cournot-Nash equilibrium, the assumptions of Proposition \ref{prop3} ii) are not necessarily satisfied under the standard assumptions. A necessary but strong assumption is that $F_{t}(q_t)=\sum_{i=1}^IP_{it}(q_{it})$ such that the system-wide problem is separable with respect to producers. In this case, it is obvious that there is a one-to-one correspondence between Nash equilibria and optimal solutions. As an example, this is the case if $\alpha_t=0,\ t=1,\dots,T$.
\end{remark}

Since an optimal solution to the system-wide problem is an equilibrium under the assumptions in Proposition \ref{prop3}, we can use this to establish existence of an equilibrium. We do this by ensuring that problem \eqref{eq14} is feasible and bounded. Boundedness is guaranteed by a bounded feasible region, e.g., if the constraints $G_iq_i+H_iu_i\leq h_i$ include $\underline q_iu_{it}\leq q_{it}\leq \overline q_iu_{it}, t=1,\dots,T$. 

\begin{corol}
Assume that $\{(q,u)\in\mathbb{R}^{TI}\times\{0,1\}^{TI}:G_iq_i+H_iu_i\leq h_i, \ A_iu_i\leq b_i,\ i=1,\dots,I\}$ is non-empty and bounded, and that the assumptions of Proposition \ref{prop3} i) hold. Then, there exists a unit commitment constrained Nash equilibrium.
\end{corol}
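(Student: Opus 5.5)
The plan is to obtain the equilibrium as an optimal solution of the system-wide problem \eqref{eq14} and then invoke Proposition \ref{prop3} i). The argument has two steps: show that \eqref{eq14} attains its maximum, and then apply the proposition.

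For the first step, I use the binary structure exactly as in Section 3. The set $\{u\in\{0,1\}^{IT}: A_iu_i\leq b_i,\ i=1,\dots,I\}$ is finite. I would enumerate it as $u^1,\dots,u^J$ and keep only those $u^j$ for which the polyhedron $Q(u^j)=\{q: G_iq_i\leq h_i-H_iu_i^j,\ i=1,\dots,I\}$ is non-empty. Non-emptiness of the feasible set of the corollary guarantees that at least one such $j$ exists.

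For each of these $j$, the set $Q(u^j)$ is a closed polyhedron. It is also bounded, because it is the $q$-slice of the bounded feasible set. It is therefore compact. The objective is
\[
q\mapsto \sum_t F_t(q_t)-\sum_i\sum_t C_i(q_{it})-\sum_i S_i(u_i^j).
\]
This is continuous, since $F_t$ and $C_i$ are continuously differentiable and the $S_i$ term is a constant once $u^j$ is fixed. By Weierstrass' theorem the economic dispatch problem with commitment $u^j$ attains its maximum at some $q^j$.

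I then choose the index $j^*$ with the largest of these finitely many optimal values. Every feasible $(q,u)$ of \eqref{eq14} has $u=u^j$ for some admissible $j$, so its objective value is at most the value at $(q^{j},u^{j})$, and hence at most the value at $(q^{j^*},u^{j^*})$. Consequently $(q^{j^*},u^{j^*})$ is optimal for \eqref{eq14}.

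In the second step, the hypotheses of Proposition \ref{prop3} i) hold by assumption, so this optimal solution solves \eqref{eq15} for every $i$. It is therefore a unit commitment constrained Nash equilibrium, which proves existence.

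The only delicate point is the existence of a maximiser in the first step. Boundedness of the mixed-integer feasible set alone does not control the continuous part, so compactness has to be argued for each slice $Q(u^j)$ separately. Closedness of each slice comes from its being defined by non-strict affine inequalities, and boundedness comes from the hypothesis on the feasible set, as explained above. Concavity of $F_t$ is not needed here; continuity suffices.
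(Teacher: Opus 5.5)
Your proof is correct, but it takes a different route from the paper's. The paper passes to the convex hull. It cites Nemhauser and Wolsey (Theorem 6.3) to assert that maximizing the objective of \eqref{eq14} over $\mathrm{conv}$ of the mixed-integer feasible set has an optimal solution that is also optimal for \eqref{eq14}, and then applies Proposition \ref{prop3} i).

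You instead enumerate the finitely many admissible commitments, observe that each non-empty slice $Q(u^j)$ is a compact polyhedron, apply Weierstrass slice by slice, and take the best of finitely many values before invoking the same proposition. Your argument is elementary and self-contained. It needs only continuity of $F_t$ and $C_i$: no concavity, no rationality of data, and $S_i$ enters only through its values at binary points.

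It also avoids a weak spot of the convex-hull route. Equivalence between optimizing over a mixed-integer set and over its convex hull holds for linear objectives. The objective of \eqref{eq14}, however, is concave and nonlinear in $q$, and the relaxation requires evaluating $S_i$ at fractional $u$. For instance, take $I=T=1$, constraints $0\leq q\leq 3u$ and objective $-(q-1)^2-2u$. The unique maximizer over the convex hull is $(q,u)=(2/3,2/9)$ with value $-5/9$, while the mixed-integer optimum is $(0,0)$ with value $-1$. So for the existence claim your argument is the more reliable one. The convex-hull viewpoint would add something, namely a continuous reformulation, only when the objective is linear.

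A minor point: your remark that boundedness of the mixed-integer set ``does not control the continuous part'' is misleading, since boundedness does bound $q$. What you actually verify is closedness. In fact the whole feasible set is compact, as the finite union of the compact sets $Q(u^j)\times\{u^j\}$, so Weierstrass can be applied to it in one step.
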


\emph{Proof:} By Wolsey and Nemhauser (Theorem 6.3), the problem
\begin{align*}
\max_{q,u} \Big\{&\sum_{t=1}^TF_t(q_t)-\sum_{i=1}^I\sum_{t=1}^TC_i(q_{it})-\sum_{i=1}^IS_i(u_i) :\nonumber\\
&(q,u)\in\text{conv}\{(q,u)\in\mathbb{R}^{TI}\times\{0,1\}^{TI}:G_iq_i+H_iu_i\leq h_i, \ A_iu_i\leq b_i,\ i=1,\dots,I\}\Big\}.
\end{align*}
has an optimal solution that is also optimal for problem \eqref{eq14}, and thus, is an equilibrium. 

\begin{remark}
Note that it is possible to define other optimisation problems than (\ref{eq14}) for which an optimal solution is an equilibrium, i.e., a solution to (\ref{eq15}). Consider, for instance, the problem 
\begin{align*}
\max_{q,u} \Big\{&\sum_{t=1}^TF_t(q_t)-\sum_{i=1}^I\sum_{t=1}^TC_i(q_{it})-V(u) :\nonumber\\
&G_iq_i+H_iu_i\leq h_i, A_iu_i\leq b_i, \ u_i\in\{0,1\}^T,\ i=1,\dots,I\Big\},
\end{align*}
where $V:\mathbb{R}^{IT}\rightarrow\mathbb{R}$. By the proof of Proposition \ref{prop3} i), a sufficient condition is that $V(u)-V(x_i,u_{-i})=S_i(u_i)-S_i(x_i)$ for all $u_i,x_i\in\mathbb{R}^T$.
\end{remark}

\setcounter{example}{5}
\begin{example}
Consider the Cournot-Nash equilibrium problem of Example \ref{ex3} and the system-wide problem
\begin{align}
\max_{q,u} \Big\{&\int_0^{q_1+q_2}(\beta-\alpha x)dx-\frac{1}{2}\alpha q_1^2-\frac{1}{2}\alpha q_1^2-c_1q_1-c_2q_2-s_1s_2u_1u_2 : 0\leq q_i\leq \overline q_iu_i, u_i\in\{0,1\}, i=1,2\Big\}.\nonumber
\end{align}
or equivalently, the mixed-integer convex quadratic program
\begin{align*}
\max_{q,u} \Big\{&\int_0^{q_1+q_2}(\beta-\alpha x)dx-\frac{1}{2}\alpha q_1^2-\frac{1}{2}\alpha q_1^2-c_1q_1-c_2q_2-s_1s_2v: 0\leq q_i\leq \overline q_iu_i,\\
&u_1+u_2-1\leq v, v\leq u_1, v\leq u_2, \ u_i\in\{0,1\}, i=1,2\Big\}.\nonumber
\end{align*}
The optimal solutions $(q_1,u_1,q_2,u_2)=(2,1,0,0)$ and $(q_1,u_1,q_2,u_2)=(0,0,2,1)$ are also equilibria. 
\end{example}

The fact that an optimal solution to the system-wide problem is not necessarily an equilibrium (cf.\ the example of Harwood et al. (2024)) justifies the characterisation (\ref{eq12})-(\ref{eq27}). Even if an optimal solution is an equilibrium, there can be other equilibria. The characterisation can likewise be used to determine multiple equilibria. To find an alternative equilibrium $(q_1,u_1,\dots,q_I,u_I)$, we add to (\ref{eq12})-(\ref{eq27}) the constraint
\begin{align}
\sum_{(i,t):u_{it}^j=1}(1-u_{it})+\sum_{(i,t):u_{it}^j=0}u_{it}\geq 1,\label{eq28}
\end{align}
ensuring that $u\neq u^j$. If the objective of the economic dispatch problems is strictly concave for fixed $(u_1,\dots,u_I)$ (e.g., under Cournot competition with $\alpha_t>0, t=1,\dots,T$) such that the optimal $(q_1,\dots,q_I)$ is unique, then all equilibria can be determined. For the procedure, see Algorithm \ref{alg1}.

\RestyleAlgo{plain}
\begin{algorithm}\label{alg1}
\caption{Multiple equilibria.}
Let $(q_1,u_1,\dots,q_I,u_I)$ be a solution to the system (\ref{eq12})-(\ref{eq27}) and $j\gets 1$.\\
\While{A solution exists}{
\vspace{2mm} $(q_1^j,u_1^j,\dots,q_I^j,u_I^j) \gets (q_1,u_1,\dots,q_I,u_I)$\\
Add the constraint (\ref{eq28}) to current system:
\begin{align*}
&(\ref{eq12})-(\ref{eq27}),\\
&\sum_{(i,t):u_{it}^l=1}(1-u_{it})+\sum_{(i,t):u_{it}^l=0}u_{it}\geq 1, \ l=1,\dots,j-1
\end{align*}
and let $(q_1,u_1,\dots,q_I,u_I)$ be a solution if such exists\\
$j\gets j+1$\vspace{2mm} 
}
\end{algorithm}

Note that even if there is no guarantee that an optimal solution to the system-wide problem is an equilibrium, it is easy to check if this solution satisfies (\ref{eq12})-(\ref{eq27}) in the initialisation of Algorithm \ref{alg1}.

\section{Numerical experiments}

We carry out numerical experiments for a case study of the all-island Irish electricity market, with a dataset including both technical and economic features of 52 generating units spread across 16 producers. 

\subsection{Case study}

We consider a unit commitment constrained Nash equilibrium between producers $k=1,\dots,K$. In the following, we assume Cournot competition, but it is easy to adjust the formulation to perfect competition. Each producer $k$ operates a set of units $i\in I_k$, with $u_{kt}=(u_{it}: i\in I_k)$ and $q_{kt}=(q_{it}: i\in I_k)\in\mathbb{R}^{|I_k|}$ representing commitments and production levels, respectively. Variable operating costs are given by the convex quadratic and non-decreasing function $C_i(q)=a_iq+b_iq^2$ with $a_i,b_i\geq 0$ and fixed online and start-up costs are $s_i^{ON}\geq 0$ and $s_i^{SU}\geq 0$, respectively. Inverse market demand is given by the affine and non-increasing function $p_t(d_t)=\beta_t-\alpha_td_t$ with $\beta_t,\alpha_t\geq 0$ and market clearing ensures that total supply meets demand, i.e., $d_t=\sum_{k=1}^KQ_{kt}, Q_{kt}=\sum_{i\in I_k}q_{it}$ for $t=1,\dots,T$. 

The equilibrium problem is:
\begin{align*}
\max_{q_k,u_k}\Big\{&\sum_{t=1}^TP_{kt}(q_{kt},q_{-kt})-\sum_{t=1}^TC_k(q_{kt})-S_k(u_k): \\
&\underline{q}_iu_{it}\leq q_{it}\leq \overline{q}_iu_{it}, u_{it}\in\{0,1\}, t=1,\dots,T, i\in I_k
\Big\}, k=1,\dots,K,
\end{align*}
where
\begin{align*}
P_{kt}(q_{kt},q_{-kt})=p_t(d_t)\sum_{i\in I_k}q_{it}, d_t=\sum_{k=1}^KQ_{kt}, Q_{kt}=\sum_{i\in I_k}q_{it}, t=1,\dots,T, k=1,\dots,K\\
C_k(q_{kt})=\sum_{i\in I_k}(a_iq_{it}+b_iq_{it}^2), t=1,\dots,T, k=1,\dots,K\\[-2mm]
S_k(u_k)=\sum_{t=1}^T\sum_{i\in I_k}(s_i^{ON}u_{it}+s_i^{SU}\max\{u_{it}-u_{it-1},0\})), k=1,\dots,K.
\end{align*}
This is equivalent to the system
\begin{align*}
\underline{q}_iu_{it}\leq q_{it}\leq \overline{q}_iu_{it}, u_{it}\in\{0,1\}, t=1,\dots,T, i\in I_k, k=1,\dots,K\\
\sum_{t=1}^TP_{kt}(q_{kt},q_{-kt})-\sum_{t=1}^TC_k(q_{kt})-S_k(u_k)\geq \sum_{t=1}^TP_{kt}(q_{kt}^j,q_{-kt})-\sum_{t=1}^TC_k(q_{kt}^j)-S_k(u_k^j),\\ 
j=1,\dots,J_k,k=1,\dots,K,\\
-p_t(Q_{kt}^l,Q_{-kt})-\alpha_t+a_i+2b_iq_{it}^l+\overline{\lambda}_{it}^l-\underline{\lambda}_{it}^l=0, \\ 
0\leq \overline{\lambda}_{it}^l\perp \overline{q}_iu_i^l-q_{it}^l\geq 0, \ 0\leq \underline{\lambda}_{it}^l\perp q_{it}^l-\underline{q}_iu_i^l\geq 0, i\in I_k, l=1,\dots,L_{kt}, t=1,\dots,T,k=1,\dots,K,\\[2mm]
Q_{kt}^l=\sum_{i\in I_k}q_{it}^l, l=1,\dots,L_{kt}, t=1,\dots,T,k=1,\dots,K,\\
Q_{kt}^l-\underline{M}_{kl}\Big(\sum_{i\in I_k:u_{it}^l=1}(1-u_{it})+\sum_{i\in I_k:u_{it}^l=0}u_{it}\Big)\leq Q_{kt}\leq Q_{kt}^l+\overline{M}_{kl}\Big(\sum_{i\in I_k:u_{it}^l=1}(1-u_{it})+\sum_{i\in I_k:u_{it}^l=0}u_{it}\Big), \\ 
l=1,\dots,L_{kt}, t=1,\dots,T,k=1,\dots,K\nonumber
\end{align*}
with $J_k=2^{|I_k|T}, L_{kt}=2^{|I_k|}$ and $q_{it}^j=q_{it}^l, i\in I_k$ if $u_{it}^j=u_{it}^l, i\in I_k$. The number of complementarity constraints increases exponentially with $|I_k|$ but only linearly with $K$ and $T$. 

The system-wide unit commitment problem is:
\begin{align*}
\max_{q,u} \Big\{&\sum_{t=1}^TF_t(q_t)-\sum_{k=1}^K\sum_{t=1}^TC_k(q_{kt})-\sum_{k=1}^KS_k(u_k) :\nonumber\\
&\underline{q}_iu_{it}\leq q_{it}\leq \overline{q}_iu_{it}, u_{it}\in\{0,1\}, t=1,\dots,T, i\in I_k, k=1,\dots,K\Big\}
\end{align*}
with 
\begin{align*}
F_{t}(q_t)=\int_0^{d_t}p_t(x)dx-(1/2)\alpha_t \sum_{k=1}^KQ_{kt}^2, \ d_t=\sum_{k=1}^KQ_{kt}, \ Q_{kt}=\sum_{i\in I_k}q_{it}.
\end{align*}
By Proposition \ref{prop3} i), the optimal solution is a unit commitment constrained Nash-Cournot equilibrium (to see this, replace $q_i$ and $u_i$ by $q_k=(q_{it}:i\in I_k)$ and $u_k=(u_{it}:i\in I_k)$, respectively, in the proof). 

\subsection{Data}

We use the data set of \cite{Devine2025} representing the all-island Irish electricity market. Accordingly, $K=16$, $I=52$, $\alpha_t:=\alpha=0.137$ Euro/MWh$^2$ and $b_i:=b=0.000213$ Euro/MWh$^2$. The rest of the data can be found online, see \cite{Devine2025data}. Moreover, we take the number of time periods as $T=24$ hours.

Our results include both perfect and Cournot competition. We justify Cournot competition as follows. The Irish power grid being an isolated system makes it vulnerable to market concentration and exercise of market power. In fact, in the data set, one dominant firm owns more than 35\% of total capacity, and two firms each own more than 10\%. For a discussion of market design and market power issues in the all-island Irish electricity market, see also \cite{Cosmo2016}. For simplicity and like \cite{Devine2025}, we assume that all producers are Cournot players. 

\subsection{Unit commitment constrained market clearing}

To compare the market equilibria with and without discreteness and demonstrate the importance of accounting for unit commitment decisions, we consider Cournot competition and solve the mixed-integer (convex) quadratic program (MIQP) of system-wide unit commitment and the (convex) quadratic program (QP) of economic dispatch. Whereas the former produces a unit commitment constrained Nash equilibrium, the latter provides a Nash equilibrium in which all producers solve an economic dispatch problem. The unit commitment constrained Nash equilibrium is guaranteed by the assumption that all producers are Cournot players. Nevertheless, this assumption can be relaxed, allowing only larger producers to make unit commitment decisions and exercise market power while the remaining producers are perfectly competitive and solve convex economic dispatch problems. As a sensitivity analysis with respect to the variations in demand, we consider both real data and randomly generated data for the intercept of the inverse demand curve. The random demand data are uniformly distributed over time between the minimum and maximum values of the real data. As a result, the random data fluctuate significantly more over time than the real data, as reflected in the autocorrelations of -0.10 and 0.96, respectively. 

The MIQP and QP were implemented in GAMS and solved by the mixed-integer quadratically constrained programming (MIQCP) routine and the quadratically constrained programming (QCP) routine in GUROBI, respectively, using a 3.1GHz i9-9900 processor with 32GB of RAM. For real demand, computation times were 3347 and 461 seconds for MIQP and QP, respectively, demonstrating the feasibility of obtaining a unit commitment constrained Cournot-Nash equilibrium by optimisation, but also illustrating the significant increase in complexity due to discreteness. The impact of discreteness, however, is more moderate for random demand, with computation times of 948 and 456 seconds for MIQP and QP, respectively.

The effects of accounting for unit commitment decisions are already studied in detail by \cite{Devine2025}, including the comparison of perfect and Cournot competition and the sensitivity with respect to the share of renewables in the system. Here, we illustrate the influence of unit commitment on market prices and volumes and provide summary statistics for producer and consumer surplus and social welfare. Market prices and volumes are depicted in Figs.\ \ref{fig1} and \ref{fig2} for the real demand data. Prices are generally higher with discrete unit commitment than without due to the internalisation of start-up and fixed costs. The variation in prices is likewise higher with discreteness. In this case, the mean and standard deviation are 14\% and 24\% higher, respectively. In contrast, the mean and standard deviation of total market volumes are 5\% and 7\% lower, respectively, reflecting the inertia caused by start-up costs. Columns 2-4 of Table \ref{table1} shows consumer surplus of the MIQP and the QP, total producer surplus (profit) and social welfare of the MIQP. For the QP solution, producer surplus and social welfare include the costs of the unit commitment  corresponding to production. That is, for an optimal solution $q=(q_{it})$ to the QP, we let $u_{it}=1$ if $\underline{q}_i\leq q_{it}\leq \overline{q}_i$ and $u_{it}=0$ if $q_{it}=0$ for $i\in I_k, k=1,\dots,K, t=1,\dots,T$, and add the unit commitment costs to those of economic dispatch. We find that failure to account for unit commitment results in an overestimation of consumer surplus by 2\% (computed as the difference in consumer surplus of the MIQP and the QP as a share of the former), a total profit loss of 58\% and a total welfare loss of 10\%. Similar results for the random demand data can be found in Columns 5-7 of Table \ref{table1} and in Figs.\ \ref{fig3} and \ref{fig4} of Appendix \ref{app:randomdemand}. The underestimation of prices and profits, the overestimation of volumes and consumer surplus, and the significant loss of social welfare, have important implications for market design and clearly illustrate the relevance a unit commitment constrained approach to market clearing.

\begin{figure}[!h]
\centering
\begin{tikzpicture}
\pgfplotsset{
    scale only axis,
    xmin=1, xmax=24
}
\begin{axis}[
    width=10cm,
    height=4cm,
    xlabel=hours,
    ymin=0, ymax=350,
    ylabel=price (Euro/MWh),
    legend style={at={(1.15,1)},anchor=north west, draw=none}
]
\addplot[] coordinates {
(1,188.6960152) (2,185.0645623) (3,167.389359) (4,164.4038902) 
(5,148.2414387) (6,131.0877015) (7,145.8503382) (8,149.1673094) 
(9,127.9857457) (10,166.8029469) (11,179.325299) (12,183.7495574) 
(13,182.2858649) (14,197.9520942) (15,195.9843641) (16,188.7814812) 
(17,230.79354) (18,323.5773585) (19,295.0110684) (20,281.4416965) 
(21,244.3635139) (22,216.094918) (23,186.5592963) (24,189.1233559)
};
\addlegendentry{MIQP}
\addplot[dotted, thick] coordinates {
(1,169.9059233) (2,157.9055839) (3,147.0464167) (4,136.7810815)
(5,131.795748) (6,120.7517845) (7,126.1998681) (8,126.7703702)
(9,118.8904735) (10,131.8197121) (11,144.9828255) (12,156.441107)
(13,167.3390633) (14,177.0828978) (15,174.3166931) (16,169.9401481)
(17,207.2734457) (18,248.7229464) (19,237.7720727) (20,231.0021812)
(21,214.2680614) (22,188.6139527) (23,169.050305) (24,170.0770469)
};
\addlegendentry{QP}
\end{axis}
\end{tikzpicture}\caption{Market prices, real data.}\label{fig1}
\end{figure}

\begin{figure}[!h]
\centering
\begin{tikzpicture}
\pgfplotsset{
    scale only axis,
    xmin=1, xmax=24
}
\begin{axis}[
    width=10cm,
    height=4cm,
    xlabel=hours,
    ymin=0, ymax=7000,
    ylabel=volume (MWh),
    legend style={at={(1.15,1)},anchor=north west, draw=none},
]
\addplot[] coordinates {
(1,4570.331623) (2,4135.805919) (3,3700.698378) (4,3188.891321)
(5,3075.053127) (6,2702.524706) (7,2808.801222) (8,2811.838533)
(9,2657.259941) (10,2940.415083) (11,3492.883829) (12,4074.200547)
(13,4523.509093) (14,4741.942121) (15,4675.568958) (16,4570.955942)
(17,5100.800999) (18,5615.800999) (19,5504.800999) (20,5388.800999)
(21,5154.800999) (22,4802.800998) (23,4554.724128) (24,4573.453144)
};
\addlegendentry{MIQP}
\addplot[dotted, thick] coordinates {
(1,4707.796569) (2,4334.49614) (3,3849.5237) (4,3390.974842)
(5,3195.366854) (6,2778.140424) (7,2952.560541) (8,2975.690529)
(9,2723.799324) (10,3196.346163) (11,3744.127214) (12,4273.984279)
(13,4632.857216) (14,4894.617424) (15,4834.085759) (16,4708.795761)
(17,5272.869796) (18,6163.422471) (19,5923.551239) (20,5757.807459)
(21,5374.973939) (22,5003.846819) (23,4682.816773) (24,4712.792529)
};
\addlegendentry{QP}
\end{axis}
\end{tikzpicture}\caption{Market volumes, real data.}\label{fig2}
\end{figure}

\begin{table}\caption{Consumer surplus (CS), total producer surplus (PS), and social welfare (SW) for the real and random demand data and the MIQP and QP solutions. Note that for the QP, producer surplus and social welfare includes the costs of unit commitment. All values are in 1,000 Euro.\\}\label{table1}
\begin{center}
\begin{tabular}{ lcccccc } 
\hline
&\multicolumn{3}{c}{real}&\multicolumn{3}{c}{random}\\
\hline
& CS & PS & SW& CS & PS & SW\\ 
 \hline
MIQP& 49,887 & 11,202 & 44,314 &  59,389 & 14,037 & 53,327\\ 
QP& 50,831 & 4,717 & 39,820 & 60,628 & 7,282 &49,120\\ 
 \hline
\end{tabular}
\end{center}
\end{table}

\subsection{Multiplicity of equilibria}

We proceed to check for multiple equilibria under both perfect and Cournot competition. Due to the computational complexity of Algorithm \ref{alg1}, we consider only the four larger units of the dataset (Whitegate, Ardnacrusha1, AghadaCCGT and GreatIslandCCGT, where Ardnacrusha1 in fact represents the combination of several small hydro units) and assume each unit acts as an independent producer the market. We likewise reduce the number of time periods to $T=8$ hours.

Fig. \ref{fig5} reflects the unit commitment schedule, showing total online capacity in the market for three distinct equilibria under Cournot competition. Furthermore, Fig. \ref{fig6} and \ref{fig7} depict total production volumes and prices. For similar results under perfect competition, see Figs.\ \ref{fig8}-\ref{fig10} of Appendix \ref{app:multeq}. Clearly, unit commitment may vary substantially across equilibria (the differences between maximum and minimum are in the range 2475-4667 MW under Cournot competition and 375-2475 MW under perfect competition), although total production varies less (the differences are in the range 112-1867 MW under Cournot competition and 194-1154 MW under perfect competition). The large differences in online capacities are reflected in prices, which likewise vary significantly across equilibria (the differences are in the range 15-256 Euro/MW under Cournot competition and 27-206 Euro/MWh under perfect competition). 

Table \ref{table2} shows consumer and producer surplus and social welfare of the three equilibria. Consistent with capacities, production and prices, these vary considerably across equilibria. Under perfect competition, maximum consumer surplus is 74\% higher than the minimum, whereas the numbers are 22\% and 24\% for producer surplus and social welfare, respectively. Under Cournot competition, these numbers are 39\%, 36\% and 24\%. Evidently, the choice of equilibrium has crucial impact on the system metrics. 

It should be noted that, for the unit commitment constrained equilibrium, the ability of producers to affect market prices does not necessarily result in higher prices, higher profits and lower social welfare than under perfect competition. Indeed, Table \ref{table2} include examples of lower total profits and higher social welfare under Cournot, cf.\ Equil.\ 1 and Equil.\ 2 under Cournot and perfect competition, respectively (although for any of the equilibria obtained in the numerical experiments, at least one producer always has higher profits under Cournot than under perfect competition). Thus, unit commitment constrained equilibria and their implications for market power should be interpreted with caution.


\begin{figure}[!h]
\centering
\begin{tikzpicture}
\pgfplotsset{
    scale only axis,
    xmin=1, xmax=8
}
\begin{axis}[
    width=10cm,
    height=4cm,
    xlabel=hours,
    ymin=0, ymax=10000,
    ylabel=online capacity (MW),
    legend style={at={(1.15,1)},anchor=north west, draw=none},
]
\addplot[] coordinates {
(1,6927.66) (4,6927.66) (5,4735.36) (8,4735.36)
};
\addlegendentry{Equil. 1}
\addplot[dotted, thick] coordinates {
(1,4360.24) (4,4360.24) (5,2260.50) (8,2260.50)
};
\addlegendentry{Equil. 2}
\addplot[dashed] coordinates {
(1,9027.4) (2,6552.54) (5,4735.36) (8,4735.36)
};
\addlegendentry{Equil. 3}
\end{axis}
\end{tikzpicture}\caption{Online capacity, Cournot competition.}\label{fig5}
\end{figure}


\begin{figure}[!h]
\centering
\begin{tikzpicture}
\pgfplotsset{
    scale only axis,
    xmin=1, xmax=8
}
\begin{axis}[
    width=10cm,
    height=4cm,
    xlabel=hours,
    ymin=0, ymax=7000,
    ylabel=volume (MWh),
    legend style={at={(1.15,1)},anchor=north west, draw=none},
]
\addplot[] coordinates {
(1,4835.38)	(2,3481.14)	(3,3230.36)	(4,3292.17)	(5,2075.07)	(6,2260.5)	(7,2260.5)	(8,2975.96)
};
\addlegendentry{Equil. 1}
\addplot[dotted, thick] coordinates {
(1,4006.39)	(2,3776.39)	(3,3051.6)	(4,2850.44)	(5,2186.98)	(6,2130.81)	(7,2127.92)	(8,1947.15)
};
\addlegendentry{Equil. 2}
\addplot[dashed] coordinates {
(1,2968.65)	(2,2738.65)	(3,2457.08)	(4,2190.84)	(5,2075.07)	(6,1826.64)	(7,2758.66)	(8,1947.15)
};
\addlegendentry{Equil. 3}
\end{axis}
\end{tikzpicture}\caption{Market volumes (production), Cournot competition.}\label{fig6}
\end{figure}


\begin{figure}[!h]
\centering
\begin{tikzpicture}
\pgfplotsset{
    scale only axis,
    xmin=1, xmax=8
}
\begin{axis}[
    width=10cm,
    height=4cm,
    xlabel=hours,
    ymin=0, ymax=450,
    ylabel=price (Euro/MWh),
    legend style={at={(1.15,1)},anchor=north west, draw=none},
]
\addplot[] coordinates {
(1,150.96)
(2,273.47)
(3,230.68)
(4,149.26)
(5,284.29)
(6,190.81)
(7,220.1)
(8,125.81)
};
\addlegendentry{Equil. 1}
\addplot[dotted, thick] coordinates {
(1,264.53)
(2,233.02)
(3,255.17)
(4,209.78)
(5,268.95)
(6,208.58)
(7,238.27)
(8,266.76)
};
\addlegendentry{Equil. 2}
\addplot[dashed] coordinates {
(1,406.7)
(2,375.19)
(3,336.62)
(4,300.14)
(5,284.29)
(6,250.25)
(7,151.85)
(8,266.76)
};
\addlegendentry{Equil. 3}
\end{axis}
\end{tikzpicture}\caption{Market prices, Cournot competition.}\label{fig7}
\end{figure}


\begin{table}\caption{Consumer surplus (CS), total producer surplus (PS), and social welfare (SW) for two perfectly competitive equilibria. All values are in 1,000 Euro.\\}\label{table2}
\begin{center}
\begin{tabular}{ lcccccc } 
\hline
&\multicolumn{3}{c}{Cournot}&\multicolumn{3}{c}{perfect}\\
\hline
& CS & PS & SW& CS & PS & SW\\ 
 \hline
Equil. 1&  5,491 & 3,233 & 8,723& 5,152 & 3,895 & 9,047\\ 
Equil. 2& 4,479 & 3,155 &7,634& 3,701 & 4,160 & 7,860 \\ 
Equil. 3& 3,164 & 3,846 &7,010& 4,258 & 3,066 & 7,324\\ 
 \hline
\end{tabular}
\end{center}
\end{table}

To illustrate the complexity of the equation system (\ref{eq12})-(\ref{eq27}) and Algorithm \ref{alg1}, we report computation time for each iteration (i.e., time to obtain a new equilibrium) as a function of the number of time periods $T$. Figure \ref{fig11} indicates that computation times increase exponentially with the number of time periods, consistent with the exponential increase in the number of constraints. For $T=8$, computation time was more than 15 minutes, and for $T=9$, it was 1 hour and 37 minutes. For the higher number of time periods, computation time can be reduced by initially checking whether the system-wide optimal solution is an equilibrium (which is it under both perfect and Cournot competition in this case study), cf.\ the first iteration.

\begin{figure}[!h]
\centering
\begin{tikzpicture}
\pgfplotsset{
    scale only axis,
    xmin=2, xmax=9
}
\begin{axis}[
    width=10cm,
    height=4cm,
    xlabel=\# time periods,
    ymin=0, ymax=6000, ymode=log,
    ylabel=time (seconds),
    legend style={at={(1.15,1)},anchor=north west, draw=none},
]
\addplot[] coordinates {
(2,0.19)
(3,0.14)
(4,0.23)
(5,0.56)
(6,1.99)
(7,9.25)
(8,26.08)
(9,216.82)
};
\addlegendentry{It.\ 1, PC}
\addplot[dotted, thick] coordinates {
(2,0.07)
(3,0.11)
(4,0.75)
(5,0.16)
(6,13.89)
(7,169.14)
(8,1233.44)
(9,5762.78)
};
\addlegendentry{It.\ 2, PC}
\addplot[dashed] coordinates {
(2,0.08)
(3,0.11)
(4,1.06)
(5,0.72)
(6,26.2)
(7,273.56)
(8,1085.51)
(9,5822.39)
};
\addlegendentry{It.\ 3, PC}
\addplot[gray] coordinates {
(2,0.44)
(3,0.09)
(4,0.31)
(5,0.51)
(6,0.81)
(7,4.1)
(8,7.34)
(9,18.44)
};
\addlegendentry{It.\ 1, CC}
\addplot[dotted, thick, gray] coordinates {
(2,0.05)
(3,0.05)
(4,0.46)
(5,1.41)
(6,4.51)
(7,15.75)
(8,161.24)
(9,402.79)
};
\addlegendentry{It.\ 2, CC}
\addplot[dashed, gray] coordinates {
(2,0.04)
(3,0.11)
(4,0.41)
(5,1.75)
(6,2.91)
(7,12.71)
(8,68.24)
(9,1247.06)
};
\addlegendentry{It.\ 3, CC}
\end{axis}
\end{tikzpicture}\caption{Computation time as a function of time, under perfect competition (PC) and Cournot competition (CC).}\label{fig11}
\end{figure}

\section{Conclusion}

This paper proposes a formulation of unit commitment constrained Nash equilibrium in which self-scheduling profit-maximizing producers solve mixed-integer convex programming problems subject to joint market clearing constraints. We derive optimality conditions and valid inequalities to arrive at a system of equations that characterises a Nash equilibrium and develop a procedure for obtaining multiple equilibria. Under Cournot competition, an equilibrium can likewise be obtained by a single mixed-integer convex programming problem. A case study illustrates the impact of unit commitment on market prices and volumes and the social welfare losses incurred by ignoring unit commitment, but also the variation across equilibria. The system-wide optimization offers market operators and policy-makers practical decision support, using standard optimisation software and without the need for specialised algorithms. For market design in particular, our work suggests that unit commitment can be efficiently incorporated into market clearing, though caution is needed when multiple equilibria exist in the market.

Our unit commitment problem does not account for network constraints. However, it would be straightforward to include such joint constraints in the market clearing without affecting the results of the paper. Furthermore, although our approach is framed in the context of a unit commitment constrained Nash equilibrium, it can be generalized to mixed-binary convex Nash games. The unit commitment constrained Nash-Cournot equilibrium is one such example for which the equilibrium can be obtained by mixed-integer convex programming. When a unit commitment constrained Nash equilibrium does not exist, it is possible to use our system of inequalities (\ref{eq12})-(\ref{eq27}), introduce slack variables in (\ref{eq17}) and minimize the disequilibrium \`a la \cite{Harwood2024}.  

\section{Acknowledgements}
Trine Krogh Boomsma was supported by the Independent Research Fund Denmark, project number 0217-00009B and the Carlsberg Foundation, grant CF24/0897. Mel Devine acknowledges funding from Research Ireland and co-funding partners under grant number 21/SPP/3756 through the NexSys Strategic Partnership Programme.

\vspace{5mm}
\bibliographystyle{informs2014} 
\bibliography{ref.bib} 

\clearpage
\newpage

\section{Appendix}
\vspace{5mm}

\subsection{Proof of Proposition \ref{prop5}:}\label{app:prop5} 
Our proof is constructive in the sense that it not only verifies the validity of the inequalities but it also demonstrates how tight they are. Let $t\in\{1,\dots,T\},i\in\{1,\dots,I\}, j\in\{1,\dots,J\}$ with $u_{it}^j=1$. Consider the inequalities
\begin{align}
&q_{it}^j-\underline{M}_{ij}\Big(\sum_{(k,s):u_{ks}^j=1}(1-u_{ks})+\sum_{(k,s):u_{k,s}^j=0}u_{ks}\Big)-M_1u_{it}-M_2(1-u_{it})\leq q_{it}\nonumber\\
&\leq q_{it}^j+\underline{M}_{ij}\Big(\sum_{(k,s):u_{ks}^j=1}(1-u_{ks})+\sum_{(k,s):u_{k,s}^j=0}u_{ks}\Big)+M_3u_{it}+M_4(1-u_{it}).\label{eq11}
\end{align}
with constants $\underline{M}_{ij}, \overline{M}_{ij}, M_1,M_2,M_3,M_4$. If $u_{it}=1, u=u^j$, (\ref{eq11}) reduces to
\begin{align*}
&-M_1\leq 0\leq M_3,
\end{align*}
i.e., $M_1,M_3\geq 0$ for the inequalities to be valid. Assume that $\underline{M}_{ij}, \overline{M}_{ij}\geq 0$. If $u_{it}=1, u\neq u^j$, then it is sufficient that 
\begin{align*}
&q_{it}^j-\underline{M}_{ij}-M_1\leq q_{it}\leq q_{it}^j+\overline{M}_{ij}+M_3.
\end{align*}
If $u_{it}=0, u\neq u^j$, then it suffices that 
\begin{align*}
&q_{it}^j-\underline{M}_{ij}-M_2\leq 0\leq q_{it}^j+\overline{M}_{ij}+M_4.
\end{align*}
Thus, if we take $M_1=M_3=0, M_2=\underline{q}_i, M_4=-\overline{q}_i, \underline{M}_{ij}=\overline{M}_{ij}=\overline{q}_i-\underline{q}_i$, the inequalities (\ref{eq11}) are valid.

\subsection{Market prices and volumes for the random demand data}\label{app:randomdemand}
\vspace{0.2cm}

\begin{figure}[!h]
\centering
\begin{tikzpicture}
\pgfplotsset{
    scale only axis,
    xmin=1, xmax=24
}
\begin{axis}[
    width=10cm,
    height=4cm,
    xlabel=hours,
    ymin=0, ymax=350,
    ylabel=price (Euro/MWh),
    legend style={at={(1.15,1)},anchor=north west, draw=none},
]
\addplot[] coordinates {
(1,146.0337642) (2,154.0080212) (3,163.9551126) (4,187.7668663) (5,234.0118043) 
(6,214.205967) (7,143.6486312) (8,163.4269225) (9,158.9838868) (10,234.8672379) 
(11,142.5508969) (12,179.1625227) (13,136.4193204) (14,222.9423156) (15,223.2847047) 
(16,184.5624307) (17,218.5254844) (18,212.1067898) (19,151.248555) (20,204.3033292) 
(21,206.3568716) (22,167.3960359) (23,220.5113476) (24,171.9426378)
};
\addlegendentry{MIQP}
\addplot[dotted, thick] coordinates {
(1,158.2079032) (2,178.119057) (3,198.7069277) (4,213.5590106)
(5,312.1124822) (6,251.5587864) (7,153.2784886) (8,197.1337682)
(9,187.7632087) (10,311.5657222) (11,151.3268183) (12,222.5804972)
(13,138.4191173) (14,276.8464441) (15,278.2133447) (16,195.3554105)
(17,266.4579995) (18,245.2710401) (19,159.5143887) (20,221.8970399)
(21,228.0480926) (22,182.428144) (23,274.386023) (24,193.782251)
};
\addlegendentry{QP}
\end{axis}
\end{tikzpicture}\caption{Market prices, random data.}\label{fig3}
\end{figure}

\begin{figure}[!h]
\centering
\begin{tikzpicture}
\pgfplotsset{
    scale only axis,
    xmin=1, xmax=24
}
\begin{axis}[
    width=10cm,
    height=4cm,
    xlabel=hours,
    ymin=0, ymax=7000,
    ylabel=volume (MWh),
    legend style={at={(1.15,1)},anchor=north west, draw=none},
]
\addplot[] coordinates {
(1,3797.803137) (2,4167.4649) (3,4541.693761) (4,4991.491236) (5,5841.171552) 
(6,5374.067511) (7,3676.252342) (8,4522.557906) (9,4366.062359) (10,5859.913353) 
(11,3627.28317) (12,4926.439073) (13,3375.140685) (14,5548.153949) (15,5555.649092) 
(16,4963.934313) (17,5451.466695) (18,5343.424715) (19,4046.652661) (20,5229.513435) 
(21,5259.490087) (22,4634.520582) (23,5494.938475) (24,4767.258458)
};
\addlegendentry{MIQP}
\addplot[dotted, thick] coordinates {
(1,3708.739316) (2,3991.072881) (3,4287.455706) (4,4802.800528) (5,5269.800981) 
(6,5100.800974) (7,3605.80203) (8,4275.96466) (9,4155.517994) (10,5298.800979) 
(11,3563.0801) (12,4608.800948) (13,3360.51053) (14,5153.801) (15,5153.801) 
(16,4884.974806) (17,5100.801) (18,5100.800999) (19,3986.181302) (20,5100.800999) 
(21,5100.800999) (22,4524.548372) (23,5100.801) (24,4607.483759)
};
\addlegendentry{QP}
\end{axis}
\end{tikzpicture}\caption{Market volumes, random data.}\label{fig4}
\end{figure}

\newpage
\subsection{Capacity, production and prices under perfect competition}\label{app:multeq}
\vspace{0.2cm}

\begin{figure}[!h]
\centering
\begin{tikzpicture}
\pgfplotsset{
    scale only axis,
    xmin=1, xmax=8
}
\begin{axis}[
    width=10cm,
    height=4cm,
    xlabel=hours,
    ymin=0, ymax=10000,
    ylabel=online capacity (MW),
    legend style={at={(1.15,1)},anchor=north west, draw=none},
]
\addplot[] coordinates {
(1,4735.36) (8,4735.36)
};
\addlegendentry{Equil. 1}
\addplot[dotted, thick] coordinates {
(1,4360.24) (3,4360.24) (4,2260.50) (8,2260.50)
};
\addlegendentry{Equil. 2}
\addplot[dashed] coordinates {
(1,4360.24) (3,4360.24) (4,2260.50) (6,2260.50) (7,2099.74) (8,2260.50)
};
\addlegendentry{Equil. 3}
\end{axis}
\end{tikzpicture}\caption{Online capacity, perfect competition.}\label{fig8}
\end{figure}


\begin{figure}[!h]
\centering
\begin{tikzpicture}
\pgfplotsset{
    scale only axis,
    xmin=1, xmax=8
}
\begin{axis}[
    width=10cm,
    height=4cm,
    xlabel=hours,
    ymin=0, ymax=7000,
    ylabel=volume (MWh),
    legend style={at={(1.15,1)},anchor=north west, draw=none},
]
\addplot[] coordinates {
(1,4591.42)	(2,2716.4)	(3,3568.29)	(4,3762.61)	(5,2610.75)	(6,2307.41)	(7,1106.19)	(8,2548.43)
};
\addlegendentry{Equil. 1}
\addplot[dotted, thick] coordinates {
(1,3458.59)	(2,2998.59)	(3,2856.81)	(4,2260.5)	(5,2260.5)	(6,2113.9)	(7,2260.5)	(8,2260.5)
};
\addlegendentry{Equil. 2}
\addplot[dashed] coordinates {
(1,4306.77)	(2,2998.59)	(3,3568.29)	(4,2260.5)	(5,1671.44)	(6,2113.9)	(7,2099.74)	(8,2260.5)
};
\addlegendentry{Equil. 3}
\end{axis}
\end{tikzpicture}\caption{Market volumes (production), perfect competition.}\label{fig9}
\end{figure}


\begin{figure}[!h]
\centering
\begin{tikzpicture}
\pgfplotsset{
    scale only axis,
    xmin=1, xmax=8
}
\begin{axis}[
    width=10cm,
    height=4cm,
    xlabel=hours,
    ymin=0, ymax=450,
    ylabel=price (Euro/MWh),
    legend style={at={(1.15,1)},anchor=north west, draw=none},
]
\addplot[] coordinates {
(1,184.38)
(2,378.24)
(3,184.38)
(4,84.81)
(5,210.9)
(6,184.38)
(7,378.24)
(8,184.38) 
};
\addlegendentry{Equil. 1}
\addplot[dotted, thick] coordinates {
(1,339.58)
(2,339.58)
(3,281.86)
(4,290.6)
(5,258.88)
(6,210.9)
(7,220.1)
(8,223.83)
};
\addlegendentry{Equil. 2}
\addplot[dashed] coordinates {
(1,223.38)
(2,339.58)
(3,184.38)
(4,290.6)
(5,339.58)
(6,210.9)
(7,242.13)
(8,223.83)
};
\addlegendentry{Equil. 3}
\end{axis}
\end{tikzpicture}\caption{Market prices, perfect competition.}\label{fig10}
\end{figure}


\end{document}